\titleformat{\section}{\vskip5pt\large\bfseries}{\thesection.}{0.5em}{\centering\vspace{3pt}}
\titleformat{\subsection}{\vskip5pt\normalsize\bfseries}{\thesubsection.}{0.5em}{}
\theoremstyle{definition}
\theoremstyle{plain}
\newtheorem{theorem}{Theorem}
\newtheorem{lemma}{Lemma}
\newtheorem{remark}{Remark}
\numberwithin{theorem}{section}
\numberwithin{lemma}{section}
\numberwithin{remark}{section}
\numberwithin{corollary}{section}
\numberwithin{equation}{section}
\newcommand{\tstd}[1]{e^{#1 \Delta}}
\def\tPN{{P}_{L, N}}
\def\T{\mathbb{T}}
\def\R{\mathbb{R}}
\def\Z{\mathbb{Z}}
\def\NL{NL^{-1}}
\def\M{\mathcal{M}}
\begin{document}

\title[]{Convergence of a moving window method for\\ 
the Schr\"odinger equation with potential on $\mathbb{R}^{\MakeLowercase{d}}$} 

\author[]{Arieh Iserles}
\address{Arieh Iserles: Department of Applied Mathematics and Theoretical Physics, University of Cambridge, E-mail: {\tt A.Iserles@damtp.cam.ac.uk}}

\author[]{Buyang Li}
\address{Buyang Li: Department of Applied Mathematics, The Hong Kong Polytechnic University, Hong Kong. E-mail: {\tt buyang.li@polyu.edu.hk}}

\author[]{Fangyan Yao}
\address{Fangyan Yao: Department of Applied Mathematics, The Hong Kong Polytechnic University, Hong Kong. E-mail: {\tt fangyan.yao@polyu.edu.hk}}

\date{}

\begin{abstract} 
We propose a novel framework, called moving window method, for solving the linear Schr\"odinger equation with an external potential in $\mathbb{R}^d$. This method employs a smooth cut-off function to truncate the equation from Cauchy boundary conditions in the whole space to a bounded window of scaled torus, which is itself moving with the solution. This allows for the application of established schemes on this scaled torus to design algorithms for the whole-space problem. Rigorous analysis of the error in approximating the whole-space solution by numerical solutions on a bounded window is established. Additionally, analytical tools for periodic cases are used to rigorously estimate the error of these whole-space algorithms. 
By integrating the proposed framework with a classical first-order exponential integrator on the scaled torus, we demonstrate that the proposed scheme achieves first-order convergence in time and $\gamma/2$-order convergence in space for initial data in $H^\gamma(\mathbb{R}^d) \cap L^2(\R^d;|x|^{2\gamma} dx)$ with $\gamma \geq 2$. In the case where $\gamma = 1$, the numerical scheme is shown to have half-order convergence under an additional CFL condition. In practice, we can dynamically adjust the window when waves reach its boundary, allowing for continued computation beyond the initial window. Extensive numerical examples are presented to support the theoretical analysis and demonstrate the effectiveness of the proposed method.\\ 

\noindent{\sc Keywords:} Schr\"{o}dinger equation, potential, whole space, bounded window, scaled torus, Fourier spectral method, exponential integrator, convergence 

\end{abstract}

\subjclass[2010]{35Q41, 65M12, 65M70}

\maketitle

\vspace{-15pt}

\section{Introduction}
This paper is concerned with the numerical approximation of solutions to the linear Schr\"odinger equation with an external potential in the whole space of $\mathbb{R}^d$, i.e.,
\begin{align}\label{model-lsp}
\left\{
\begin{aligned}
i \partial_t u(x,t) + \Delta u(x,t) &= V(x)u(x,t) && \quad \mbox{for}\,\,\, (x,t) \in \mathbb{R}^d \times (0,T],
\\
u(x,0) &= u_0(x) && \quad \mbox{for}\,\,\, x \in \mathbb{R}^d,
\end{aligned}
\right. 
\end{align}
where  the solution $u(x,t)$ represents a complex-valued wave function that evolves over time under the influence of the potential $V(x)$, which is assumed to be smooth, compactly supported, and independent of time, and $u_0$ is a given initial wave function. In quantum physics this equation models the time evolution of quantum states, serving as a cornerstone for understanding microscopic particle dynamics. In theoretical chemistry, it is pivotal for simulating molecular interactions and chemical reactions. In optics it describes wave propagation through various media, while in condensed matter physics, it aids in the examination of electronic structures and material behavior. For extensive elaboration of related theoretical developments and applications, see \cite{IKS2018,MS1997PRA,SD1995CMA, Shankar1994}.

One of the main computational challenges in solving this problem numerically involves accurately representing and calculating wave functions over an infinite spatial domain, presenting significant complexities. A related yet more tractable problem is the Schr\"odinger equation on the torus $\mathbb{T}^d$ with $\T=[-\pi,\pi]$, a bounded domain with periodic boundary conditions. This periodic formulation naturally surfaces in scenarios where both the initial condition and the potential are periodic, such as periodic lattice crystal potentials in solid-state physics. Over the last few decades, rigorous  analysis of numerical methods for the periodic Sch\"odinger equation on the torus $\mathbb{T}^d$ has been extensively studied. 

Among the prominent approaches, the time-splitting spectral method \cite{BIK+2014FCM,JL2000BNM,Lubich2008MC} and exponential integrators \cite{Faou2012,HO2010AN} have gained popularity. Both methods leverage the Fourier spectral method for spatial discretization by effectively utilizing the Fourier series which forms a countable orthogonal basis of $L^2(\mathbb{T}^d)$. In terms of accuracy, the Fourier spectral method offers spectral convergence in spatial discretization for smooth solutions 
and has $\gamma/2$-order temporal convergence in the $L^2$ norm for solutions in $C([0,T],H^\gamma)$ with $\gamma \ge 2$. The computation using Fourier spectral method can also be greatly simplified in the frequency domain, thanks to the fact that Fourier bases are eigenfunctions of the Laplacian operator. Additionally, the celebrated Fast Fourier Transform (FFT) accelerates transformations between the physical and frequency domains. 

Recently, a new class of time-integrators, known as low-regularity integrators (LRIs), has been developed in \cite{OS2018FCM,CLL2023IJNA, LW2021NM, MWZ2024, ORS2021FCM, ORS2022JEMS, WY2021MC}. These integrators aim to reduce the regularity requirements of numerical schemes for the nonlinear Schr\"odinger equation while maintaining equivalent convergence rates. The LRIs also utilize the Fourier spectral method for spatial discretization, with a key innovation being their exploitation of resonances among different Fourier modes.

The critical role of the Fourier basis in the above-mentioned methods underscores the importance of having an orthogonal basis in the $L^2(\mathbb{T}^d)$ space for designing numerical schemes for the periodic Schr\"odinger equation, while the primary challenge in numerically solving the Schr\"odinger equation with an external potential in the whole space $\R^d$ lies in accurately representing and computing wave functions over an unbounded spatial domain, which introduces significant computational challenges. 
%

Accordingly, significant efforts have been devoted to research on orthogonal systems on $L^2(\R^d)$, including Hermite functions \cite{BS2005SJSC,Weideman2006JPMG}, Malmquist-Takenaka (M-T) functions \cite{Boyd1987JCP,Christov1982SJAM,Weideman1994SJNA}, and stretched Fourier functions \cite{DITRN}. 
For interested readers, we recommend \cite{ILW2022,IW2019FCM} for more details. 
The stretched Fourier functions have the simplest construction, with basis functions being zero extensions of the Fourier basis on a scaled torus. 
In contrast, the basis functions for Hermite and M-T functions are derived from applying the scaled inverse Fourier transform to orthogonal polynomials in frequency space \cite{IW2020JFAA}. 
Recently, the M-T functions have been applied to solve the semiclassical linear Schr\"odinger equation with initial functions in the form of Gaussian packets \cite{IKS+2022}.
As pointed out in \cite{ILW2022}, the M-T functions outperform other methods in terms of accuracy and efficiency for approximating Gaussian wave packets. 
It is proved that the M-T expansion coefficient of the Gaussian wave packets decays exponentially, and can be approximated by utilizing a relation between the M-T system and the Fourier series on $\T$, i.e., 
\begin{align}\label{MTfft}
(u, \phi_n)=\frac{(-i)^n}{2 \sqrt{2 \pi}} \int_{-\pi}^\pi U( \theta) \mathrm{e}^{-i n \theta} \mathrm{d} \theta,
\quad 
U(\theta)=\Big(1-i \tan \frac{\theta}{2}\Big) u \Big(\frac{1}{2} \tan \frac{\theta}{2}\Big),
\end{align} 
where $ \phi_n$ is the $n$th normalized orthogonal basis function in the M-T system. 

However, applying the M-T system to the linear Schr\"odinger equation with an external potential, particularly when the exact solution exhibits limited regularity, still presents challenges in achieving optimal-order convergence in space. Let us explain this point by examining the error of approximating a function by its low-frequency projection under different systems, such as the M-T system and the stretched Fourier system.

For the M-T system, according to \eqref{MTfft}, the associated low-frequency projection ${P^1_N} u$ satisfies
\begin{align*}
{P^1_N} u = \sum_{|n|\le N} a_n\phi_n 
\quad\mbox{with}\,\,\, a_n = \frac{(-i)^n}{2 \sqrt{2 \pi}} \int_{-\pi}^\pi U( \theta) \mathrm{e}^{-i n \theta} \mathrm{d} \theta . 
\end{align*}
Thus the error $u - {P^1_N} u \in L^2(\R)$ is related to $U-P_NU \in L^2(\T)$ through coordinate transformation. This leads to the following error estimate: 
\begin{align}\label{interp-error}
\| u - {P^1_N} u\|_{L^2(\R)}
\lesssim \|U - P_N U\|_{L^2(\T)}\lesssim N^{-1} \| U \|_{H^1(\T)}.
\end{align}
By calculation it is easy to find that 
\begin{align*}
\partial_\theta U( \theta)
= -\frac{i}{2}(1+4x^2) u(x) + 
\frac14(1-2ix)\partial_xu(x)(1+4x^2)
\quad\mbox{with}\,\,\, x = \frac12 \tan( \theta/2)
\end{align*}
and therefore 
\begin{align}\label{U-H1}
\| U \|_{H^1(\T)}\lesssim
\|u\|_{H^1(\R)}+ \| x u\|_{L^2(\R)}+ \| x^2 \partial_xu\|_{L^2(\R)},
\end{align}
which depends on the rates of decay of $u$ and $\partial_xu$ as $x\rightarrow\infty$. As will be shown in this paper, the problem \eqref{model-lsp} is well-posed in \( H^\gamma(\R^d) \cap L^2(\R^d;|x|^{2\gamma} \, dx) \). In view of \eqref{interp-error} and \eqref{U-H1}, first-order convergence in approximating the solution $u$ generally requires $\partial_xu\in L^2(\R^d;|x|^{4} \, dx) $ and therefore requires $u \in H^\gamma(\R^d) \cap L^2(\R^d;|x|^{2\gamma} \, dx) \hookrightarrow H^1(\R^d;|x|^{2\gamma-2} \, dx)$ with $ \gamma \geq 3 $. 
More generally, $\gamma/2$-order convergence in $L^2(\R^d)$ cannot be achieved for $u \in H^\gamma(\R^d) \cap L^2(\R^d;|x|^{2\gamma} \, dx) $.

For the stretched Fourier system which utilizes the zero extensions of the Fourier series on the scaled torus $[-L,L]$ by approximating a function $u$ with $P_N^2 u$, defined as the zero extension to $\R$ of the $N$-term Fourier series of $u_L = u|_{(-L,L)}$ on $[-L,L]$, the following error estimate holds (according to the Bernstein inequality in Lemma~\ref{lem:Bernstein}): 
\begin{align}\label{u-P2Nu}
\| P_N^2 u - u\|_{L^2(\R)}
&\lesssim 
\| u\|_{L^2((-L,L)^c)} + 
\| P_N^2 u_L - u_L\|_{L^2(-L,L)} \notag\\ 
&\lesssim L^{-1} \| xu\|_{L^2(\R)} + LN^{-1} \| u_L\|_{H^1_{\textrm{per}}([-L,L])} ,
\end{align}
where $H^1_{\textrm{per}}([-L,L])$ denotes the $H^1$ norm for periodic functions on $[-L,L]$. 
Though the first term on the right-hand side of \eqref{u-P2Nu} can now be well controlled by the decaying property of the solution $u$, the second term may not be bound as $u_L$ may fail to be periodic. Thus, the challenge of designing a numerical scheme for \eqref{model-lsp} in the whole space with provable optimal convergence order remains unresolved.

The objective of this paper is to develop a general framework for solving the linear Schr\"odinger equation with an external potential in the unbounded domain $\mathbb{R}^d$, with optimal-order convergence for solutions in $H^\gamma(\R^d) \cap L^2(\R^d;|x|^{2\gamma} dx)$ with $\gamma \in\Z^+$. Our approach is to incorporate a smooth domain-truncation function \(\chi_L\) into the framework of stretched Fourier system, to derive a truncated partial differential equation (PDE) on a bounded window of scaled torus $\Omega_L=[-L,L]^d$ and then solve the PDE by using standard approaches such as exponential integrators for PDEs on a torus. 
The smooth domain-truncation function \(\chi_L\) is constructed by dilating a cut-off function defined on \((-1,1)^d\), such that \(\chi_L\) is supported on \([-L, L]^d\) and equals 1 on \((-aL, aL)^d\) for some $a\in(0,1)$. 
Then we decompose the solution of problem \eqref{model-lsp} into two parts: \(u_L = \chi_L  u |_{\Omega_L}\) and \(u - u_L\), where the latter represents the domain-truncation error that is easily shown to be of an optimal order.
The first part is periodic on $\Omega_L=[-L,L]^d$ and satisfies a modified equation 
\begin{align}\label{eq:truncate}
i \partial_t u_L + \Delta u_L
&= \chi_L V u_L + R , 
\end{align}
with a remainder $R$ that can be shown to satisfy the following estimate (thanks to the scaling property involved in the construction of $\chi_L$):
\begin{align*}
\|R\|_{L^\infty(0,T;L^2(\Omega_L))} \le C L^{-\gamma}.
\end{align*}
Thus, the proposed numerical scheme consists of truncating the remainder $R$ and applying an exponential integrator to \eqref{eq:truncate} by utilizing Fourier series on the scaled torus $\Omega_L=[-L,L]^d$. 
For illustration, we present rigorous analysis for the convergence of an exponential Euler method in this framework. By choosing an appropriate frequency \(N = L^2\), we prove that the method achieves first-order convergence in time and \(\gamma/2\)-order convergence in space for \(u_0\in H^\gamma(\R^d) \cap L^2(\R^d;|x|^{2\gamma} dx)\) with \(\gamma \geq 2\). In the case \(\gamma = 1\), an additional CFL condition \(\tau=O(N^{-1})\) is required for the algorithm to have half-order convergence. 


From a practical perspective, it is crucial to address the spurious reflections that occur when the waves approach the boundary of the bounded window of computational domain. To mitigate this issue, we implement a dynamic adjustment of the truncation parameter \(L\) by detecting when the solution nears the boundary. When this occurs, we implement a reinitialization procedure that involves extending the bounded window of computational domain and applying zero-extension to the numerical solutions in physical space.

The rest of this paper is organized as follows: In Section \ref{section:notation}, we introduce the notation and present the main results of this paper, including the numerical scheme and the main theoretical results on the convergence of numerical solutions. In Section \ref{section:preliminary}, we present some basic theoretical results and tools for the error analysis in this paper, including well-posedness of the Schr\"odinger equation in a weighted Sobolev space, Bernstein's inequalities, and the Kato--Ponce inequalities. In Section \ref{section:truncated}, we present the proof of the main theorem, including error estimates for approximating functions on the whole space in a weight Sobolev space, and error estimates for the truncated problem on the scaled torus. Numerical examples are presented in Section \ref{section:numerical}, and conclusions are presented in Section \ref{section:conclusion}. 

The eventual evolution of our approach, underscored by its name, `a moving window method', should allow for the parameter $L$ to vary with time: grow once the solution nears the boundary, shrink once it recedes away from it. Moreover, in principle, the torus need not be of the same size in each dimension, i.e.\ we may consider a periodization of the parallelepiped 
\begin{displaymath}
  [-L_1,L_1]\times[-L_2,L_2]\times\cdots\times[-L_d,L_d].
\end{displaymath}
This will feature in future publication, while in this paper we focus on the mathematics underlying our construction.

\section{Notation and main results}\label{section:notation}
In this section, we introduce the main concepts and the notation that will be used throughout this paper, and then present our numerical scheme and the convergence results for the linear Schr\"odinger equation with a potential on $\R^d$.

\subsection{Notation}

The Fourier transform of a function $f\in L^2(\R^d)$ is denoted by ${\mathcal{F}} f(\xi) = \frac{1}{(2\pi)^d}\int_{\R^d} f(x) e^{-ix \cdot \xi} dx$. 
The Bessel potential operator $J^s$ is defined as 
\begin{align*}
J^s f = \mathcal{F}_{\xi}^{-1} [(1+ |\xi|^2)^{s/2} \mathcal{F}f(\xi)] , 
\end{align*}
where $\mathcal{F}_{\xi}^{-1}$ denotes the inverse Fourier transform with respect to $\xi$.  
Similarly, we define 
\begin{align*}
|\nabla |^s f = \mathcal{F}_{\xi}^{-1} [|\xi|^s \mathcal{F}f(\xi)] , 
\end{align*} 
For a function $ f $ defined on the torus $\Omega_L=[-L,L]^d $, we define its Fourier transform on $\Omega_L$ as
\begin{equation}\label{defPeriodFourier}
\mathcal{F}_L f(k) = \frac{1}{(2L)^d} \int_{\Omega_L} f(x) e^{-i \pi k \cdot x/L} \, dx.
\end{equation}
Then $f(x) =  \sum_{k\in\Z^d} {\mathcal{F}}_L f(k) e^{ i \pi k\cdot x/L}$ and the following Parseval's identity holds:
\begin{align}\label{Parseval}
&\| f\|_{L^2( \Omega_L)}^2 = (2L)^d \| {\mathcal{F}} _L f(k)\|_{\ell^2_k}^2
 = (2L)^d \sum_{k\in \Z^d} | {\mathcal{F}} _L f(k)|^2 . 
\end{align} 
For $k\in \Z^d$, we denote by $|k|_\infty$ the maximal absolute value of its components. We define $J_L^s$ by its Fourier coefficients in a manner similar to that of its continuous counterpart, 
\begin{align*}
J^s_L f = {\mathcal{F}} _{L,k}^{-1} [(1+ |k|^2\pi^2/L^2)^{s/2} {\mathcal{F}}_L f(k)] ,
\end{align*}
where $\mathcal{F}_{L,k}^{-1}$ denotes the inverse Fourier transform on $\Omega_L$ with respect to $k$. Therefore, using relation \eqref{defPeriodFourier}, we have 
\begin{align}\label{FT-Jsf}
\| J^s_L f \|_{L^2(\Omega_L)} = (2L)^{d/2} \| (1+ |k|^2\pi^2/L^2)^{s/2} {\mathcal{F}}_L f(k) \|_{l^2_k} .
\end{align}
where $\|g(k)\|_{l^2_k}: = \big(\sum_{k\in\Z^d} |g(k)|^2\big)^{\frac12}$ for any square summable function $g$ defined on $\Z^d$. 
Similarly, we define
\begin{align}\label{def:nab}
\| |\nabla|^s_L f \|_{L^2(\Omega_L)} = (2L)^{d/2} \| (|k|\pi/L)^{s} {\mathcal{F}}_L f(k) \|_{l^2_k} .
\end{align} 
Then we introduce  periodic Sobolev spaces 
\begin{align*}
H^s_{\mathrm{\mathrm{per}}}( \Omega_L):= 
\{ 
f\in L^2( \Omega_L): J^s_L f\in L^2( \Omega_L) 
\} . 
\end{align*}
For simplicity of notation, we shall omit the subscript in $H^s_{\mathrm{per}}( \Omega_L)$, writing $H^{s}( \Omega_L)$ instead.

The low-frequency projection $\tPN: L^2(\Omega_L)\rightarrow S_{L,N}=\big\{\sum_{|k|_\infty\le  N} a_k  e^{ i \pi k\cdot x/L}: a_k\in\mathbb{C}\,\,\mbox{for}\,\,k\in\Z^d\big\}$ is defined as 
\begin{align*} 
\tPN f =  \sum_{|k|_\infty\le  N} {\mathcal{F}}_L f(k) e^{ i \pi k\cdot x/L}. 
\end{align*} 
The trigonometric interpolation of a continuous function $f$ on $\Omega_L=[-L,L]^d$ is defined as a trigonometric polynomial 
$$
\displaystyle I_{L,N} f=\sum\limits_{|k|_\infty\le  N} a_k  e^{ i \pi k\cdot x/L}
$$ 
satisfying relation 
$I_{L,N} f(x_n) = f(x_n)$ at $x_n=\frac{2nL}{2N+1}$ for $|n|_\infty\le N$. 
In particular, the coefficients $a_k$ in this expression are given by
\begin{align*}
 a_k=\frac{1}{(2N+1)^d}\sum_{|n|_\infty\le  N} f(x_n) e^{ -i \pi k\cdot x/L},
\quad 
 x_n = \left(\frac{2n_1L}{2N+1}, \frac{2n_2L}{2N+1},\cdots \frac{2n_dL}{2N+1}\right).
\end{align*}
The pointwise evaluation of $f(x_n)$ requires $f$ to be continuous on $\Omega_L$. According to the Sobolev embedding theorem, the continuity of $f$ is guaranteed if $f\in H^s(\Omega_L)$ for some $s > d/2$.


For the simplicity of notation, we denote by $ A \lesssim B $ or $ B \gtrsim A $  the statement $ A \leq CB $ for some positive constant $ C $.  The value of $ C $ may vary in different instances, but it is always independent of the stepsize $ \tau $, the degrees of freedom $ N $, diameter of the computational region $L$, and the time level $n$. Additionally, if a statement includes $s+$ or $s-$ for some number $s$, it means the statement holds for $s + \epsilon$ or $s - \epsilon $ for any small $ \epsilon > 0 $.

\subsection{Main results}
Given $ a \in (0,1) $, we define a smooth cut-off function $ \chi_{a,1} \in C_c^\infty(\mathbb{R}^d) $ by setting $ \chi_{a,1}(x) = 1 $ for $ x \in (-a,a)^d $ and ensuring $ \textrm{supp}( \chi_{a,1}) \subset (-1,1)^d $. By scaling, we construct the cut-off function $ \chi_{a,L}(x) = \chi_{a,1}\left(\frac{x}{L}\right) $, which is supported in $ \Omega_L = (-L,L)^d $.

Let $t_n=n\tau$, $n=0,1,\dots,M$, be a partition of the time internal $[0,T]$ with stepsize $\tau=T/M$, and let $u_L^n$ represent the numerical solution at $t=t_n$. We consider the following numerical scheme:
\begin{align}\label{scheme-p}
u_L^{n+1} = \tstd{i \tau} u_L^n -i \tau \varphi_1(i \tau \Delta) \tPN ( I_{L,N}(\chi_{1/2,L} V)\cdot  u_L^n),\quad 
\varphi_1(z) = \frac{e^z -1}{z},
\end{align}
with the initial function $u^0_L = \tPN (\chi_{1/2,L} u_0)$. 

The main theoretical result of this paper is the following theorem. 

\begin{theorem}\label{theorem-lsp}
For given $T>0$ and initial value $u_0\in H^\gamma(\R^d)\cap L^2(\R^d;|x|^{2\gamma}dx)$ with 
$ \gamma\in \Z^+$,
we denote by $u$ the solution of \eqref{model-lsp} and $u_L^n$ the numerical solution given in \eqref{scheme-p}.
Then there exist constants $\tau_0\in (0,1)$ and $C_T>0$ such that the following error estimates hold for $\tau\leq\tau_0$: 
\begin{align*}
&\|u(t_n,\cdot)- Eu_L^n\|_{L^2(\R^d)}
  \leq C_T\big( L^{-1} + (NL^{-1})^{- 1} +\tau NL^{-1} +\tau\big) && \!\!{\rm if}\; \gamma=1\;\mbox{\rm and}\; \tau N^2L^{-2}\le 1,\\
 &\|u(t_n,\cdot)- Eu_L^n\|_{L^2(\R^d)}
  \leq C_T\big( L^{-\gamma}  + (NL^{-1})^{- \gamma} +\tau \big) && \!\!{\rm if} \ \gamma\ge2,
\end{align*}
where $E$ denotes the zero extension from $\Omega_L$ to $\R^d$, and the constants $\tau_0$ and $C_T$ depend only on $\|u_0\|_{H^\gamma(\R^d)\cap L^2(\R^d;|x|^{2\gamma}dx)}$ and $T$.
\end{theorem}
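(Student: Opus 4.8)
The plan is to decompose the total error into three conceptually distinct pieces and control each separately: (i) the \emph{domain-truncation error} $\|u - \chi_{1/2,L}u\|_{L^2(\R^d)}$, coming from replacing the whole-space solution by the windowed solution $u_L = \chi_{1/2,L}u|_{\Omega_L}$; (ii) the \emph{modeling/remainder error}, i.e.\ the effect of dropping the remainder $R$ in the modified equation \eqref{eq:truncate}; and (iii) the \emph{numerical error} of the exponential Euler scheme \eqref{scheme-p} applied to the truncated equation on the scaled torus $\Omega_L$. For (i) one writes $u-\chi_{1/2,L}u = (1-\chi_{1/2,L})u$, which is supported where $|x|\gtrsim L$, so $\|(1-\chi_{1/2,L})u\|_{L^2(\R^d)}\lesssim L^{-\gamma}\||x|^\gamma u\|_{L^2(\R^d)}$, giving the $L^{-\gamma}$ term (the $L^{-1}$ term in the $\gamma=1$ line being the same estimate). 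Using the well-posedness of \eqref{model-lsp} in $H^\gamma(\R^d)\cap L^2(\R^d;|x|^{2\gamma}dx)$ quoted in the introduction, $\||x|^\gamma u(t)\|_{L^2}$ is bounded uniformly on $[0,T]$ in terms of the initial data.

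Next I would establish the structure and size of the remainder $R$. Applying $i\partial_t+\Delta$ to $u_L=\chi_{1/2,L}u$ and using that $u$ solves \eqref{model-lsp}, one gets $R = \Delta(\chi_{1/2,L})u + 2\nabla\chi_{1/2,L}\cdot\nabla u + (1-\chi_{1/2,L})\chi_{1/2,L}Vu$; since $\chi_{1/2,L}(x)=\chi_{1/2,1}(x/L)$, each derivative landing on $\chi$ produces a factor $L^{-1}$ (and the symbol is supported in the annulus $aL\le|x|_\infty\le L$), so by the same weighted-Sobolev argument one obtains $\|R\|_{L^\infty(0,T;L^2(\Omega_L))}\lesssim L^{-\gamma}$, exactly as claimed in the excerpt; the $V$-term is in fact identically zero once $a\ge 1/2$ and $\mathrm{supp}\,V$ is contained in the unit window, but in general it is controlled the same way. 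The point here is that the moving window is designed precisely so that the truncation is smooth and the remainder inherits the decay of the solution.

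The bulk of the work — and the main obstacle — is step (iii): the numerical analysis of \eqref{scheme-p} as a discretization of the \emph{periodic} equation $i\partial_t u_L+\Delta u_L = \chi_{1/2,L}Vu_L$ on $\Omega_L$, with the subtlety that all constants must be uniform in $L$ and $N$. I would run a standard Lady Windermere / discrete Gronwall argument: write the exact periodic solution via Duhamel, subtract the scheme, and split the local error into (a) the \emph{spatial projection/interpolation error} $\|(\mathrm{Id}-\tPN)u_L\|$ and $\|(\tPN-I_{L,N})(\chi_{1/2,L}Vu_L)\|$, handled by Bernstein's inequality (Lemma~\ref{lem:Bernstein}) and the Kato--Ponce inequality to bound $\|\chi_{1/2,L}Vu_L\|_{H^s(\Omega_L)}$ in terms of $\|u_L\|_{H^s}$ — and crucially $\|u_L\|_{H^s_{\mathrm{per}}(\Omega_L)}$ is comparable to $\|u(t)\|_{H^s(\R^d)}$ up to $L^{-1}$-type corrections because $\chi$ is smooth, which is what converts the spatial error into the $(NL^{-1})^{-\gamma}$ term; (b) the \emph{temporal quadrature error} of the exponential Euler step, which for the first-order $\varphi_1$-integrator is $O(\tau)$ once one time-derivative of the Duhamel integrand is controlled, i.e.\ $O(\tau\|\partial_t(\chi_{1/2,L}Vu_L)\|_{L^2})\lesssim\tau\|u_L\|_{H^2}$ — this needs $\gamma\ge2$, which is why for $\gamma=1$ one cannot get $O(\tau)$ and instead accepts an $O(\tau NL^{-1})$ bound obtained by crudely estimating $\partial_t u_L$ through $\Delta u_L$ at the cost of a factor $N^2L^{-2}$, whence the CFL restriction $\tau N^2L^{-2}\le 1$ is exactly what makes this term summable; and (c) the propagation of errors, where the potential term $\tPN I_{L,N}(\chi_{1/2,L}V)\cdot(\,\cdot\,)$ acting on the error is bounded in $L^2(\Omega_L)$ uniformly (using $\|\chi_{1/2,L}V\|_{L^\infty}\le\|V\|_{L^\infty}$ and boundedness of $\tPN$, $I_{L,N}$ on the relevant spaces), so discrete Gronwall closes with a constant $e^{C_T}$. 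Summing the $n\le M=T/\tau$ local contributions produces the stated global bounds.

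Putting the three pieces together by the triangle inequality, $\|u(t_n)-Eu_L^n\|_{L^2(\R^d)}\le\|u(t_n)-\chi_{1/2,L}u(t_n)\|_{L^2(\R^d)}+\|\chi_{1/2,L}u(t_n)-Eu_L^n\|_{L^2(\Omega_L)}$, gives $L^{-\gamma}+(NL^{-1})^{-\gamma}+\tau$ for $\gamma\ge2$ and the corresponding CFL-restricted estimate for $\gamma=1$. I expect the genuinely delicate points to be: tracking uniformity in $L$ throughout (especially the equivalence of periodic and whole-space Sobolev norms of $\chi_{1/2,L}u$, where the scaling of $\chi$ is essential and the error terms must be charged to the $L^{-\gamma}$ bucket rather than blowing up), and the low-regularity $\gamma=1$ case where the temporal error has to be traded against the CFL condition rather than against a second derivative of the solution.
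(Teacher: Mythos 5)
Your proposal follows essentially the same route as the paper: truncation by the scaled smooth cutoff with the $L^{-\gamma}$ window error charged to $\||x|^\gamma u\|_{L^2}$, the remainder $R$ in \eqref{PDE:boundeddomain} bounded by $L^{-\gamma}$, a consistency-plus-stability (Gronwall) analysis of the exponential Euler scheme on $\Omega_L$ using Bernstein (Lemma \ref{lem:Bernstein}) and Kato--Ponce (Lemma \ref{lem:kato-Ponce}) with constants uniform in $L,N$, and a final triangle inequality; this is exactly the structure of Sections 4.1--4.4, and the outline is correct.

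Two steps are glossed in a way worth flagging. First, in the bound $\|R\|_{L^\infty(0,T;L^2(\Omega_L))}\lesssim L^{-\gamma}$ the term $2\nabla\chi_L\cdot\nabla u$ is not handled by "the same weighted-Sobolev argument'': the solution class only provides $\||x|^\gamma u\|_{L^2}$ and $\|u\|_{H^\gamma}$, so turning $L^{-1}\|\nabla u\|_{L^2(\Omega_{L/2}^c)}$ into $L^{-\gamma}$ requires control of the mixed norm $\||x|^{\gamma-1}|\nabla| u\|_{L^2}$; the paper gets this from the interpolation inequality $\||x|^{\gamma-1}|\nabla| u\|_{L^2}\le\||x|^{\gamma}u\|_{L^2}^{(\gamma-1)/\gamma}\||\nabla|^{\gamma}u\|_{L^2}^{1/\gamma}$ (Lemma 4 of \cite{NP2009CPDE}), an ingredient your sketch omits (it is vacuous only for $\gamma=1$). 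Second, your accounting of the $\gamma=1$ case is off: the paper bounds the local error $R_3^n$ by $\tau^2(1+NL^{-1})$, paying a \emph{single} Bernstein factor $NL^{-1}$ against the available $H^1$ regularity (not a factor $N^2L^{-2}$), and summation over $n\le T/\tau$ gives $\tau NL^{-1}+\tau$ with no CFL needed for convergence of the sum or for the Gronwall stability step; the condition $\tau N^2L^{-2}\le 1$ only guarantees $\tau NL^{-1}\le (NL^{-1})^{-1}$, i.e.\ that the extra temporal term does not degrade the stated rate, rather than "making the term summable.'' Neither point changes the architecture of your argument, but both must be repaired for the estimates to close as stated.
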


\begin{remark}\upshape 
By choosing $N = L^2$ in the numerical scheme in \eqref{scheme-p}, the error estimates in Theorem \ref{theorem-lsp} reduce to the following results: 
\begin{align*}
\begin{aligned}
&\|u(t_n,\cdot)- Eu_L^n\|_{L^2(\R^d)}
  \leq C_T N^{-\frac12} && {\rm if} \ \gamma=1\;\mbox{\rm and}\; \tau = O(N^{-1}),\\
&\|u(t_n,\cdot)- Eu_L^n\|_{L^2(\R^d)}
  \leq C_T\big( N^{-\frac{\gamma}2}   +\tau \big) && {\rm if} \ \gamma\ge2.
  \end{aligned}
\end{align*}
\end{remark}

\section{Preliminary results}\label{section:preliminary}
In this section we present some preliminary results to be used in the proof of Theorem~\ref{theorem-lsp}. These include the well-posedness of equation \eqref{model-lsp}, Bernstein's inequalities on the scaled torus $ \Omega_L$, and approximation theory (i.e., approximating functions on $\R^d$ by zero extensions of finite Fourier series on the torus $\Omega_L$).

\subsection{Well-posedness of the PDE}
The well-posedness of equation \eqref{model-lsp} in a weighted Sobolev space, which guarantees both regularity and spatially decaying property of the solution, is presented in the following lemma. The proof of the lemma is presented in \ref{appendix}. 

\begin{lemma}[Global well-posedness] \label{lem:lwp}
Let $u_0\in H^\gamma(\R^d)\cap L^2(\R^d;|x|^{2\gamma}dx)$ for some $\gamma \in \Z^+$. Then equation \eqref{model-lsp} has a unique solution $u\in C(\R; H^\gamma(\R^d)) \cap C (\R; L^2(\R^d;|x|^{2 \gamma} d x))$. 
\end{lemma}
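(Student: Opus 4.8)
The plan is to establish existence and uniqueness in $C(\R;H^\gamma(\R^d))$ by a standard contraction/energy argument (the potential $V$ is smooth, compactly supported, hence bounded with bounded derivatives, so multiplication by $V$ is a bounded operator on every $H^s$), and then to propagate the weighted bound $u\in C(\R;L^2(\R^d;|x|^{2\gamma}\,dx))$ by commuting the weight through the Schr\"odinger group. Write $u(t)=\tstd{it}u_0-i\int_0^t\tstd{i(t-s)}\big(Vu(s)\big)\,ds$ via Duhamel. Since $\tstd{it}$ is a unitary group on $H^s(\R^d)$ for every $s$ and $f\mapsto Vf$ is bounded on $H^\gamma$ with norm $\lesssim\|V\|_{C^\gamma}$, a fixed-point argument on $C([0,\delta];H^\gamma)$ gives a local solution; linearity and the a priori bound $\|u(t)\|_{H^\gamma}\le\|u_0\|_{H^\gamma}+C\int_0^t\|u(s)\|_{H^\gamma}\,ds$, closed by Gr\"onwall, makes $\delta$ independent of the data size and yields a unique global solution $u\in C(\R;H^\gamma(\R^d))$. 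Uniqueness on the whole line follows from the same Gr\"onwall estimate applied to the difference of two solutions.

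For the weighted estimate, the key observation is the commutator identity $[\,|x|^{2\gamma},\Delta\,]$ and, more cleanly, $[x_j,\tstd{it}]=2it\,\partial_{x_j}\tstd{it}$ (equivalently $x_j\tstd{it}=\tstd{it}(x_j+2it\partial_{x_j})$), so that applying a multiplication-by-$x^\alpha$ operator (for a multi-index $|\alpha|\le\gamma$) to the Duhamel formula produces, after moving $x^\alpha$ through the group, terms of the form $\tstd{it}\big((x+2it\nabla)^\alpha u_0\big)$ plus Duhamel-type integrals involving $(x+2i(t-s)\nabla)^\alpha(Vu(s))$. Each such term is controlled by $\|x^\beta u_0\|_{L^2}$ for $|\beta|\le\gamma$ (finite by hypothesis) together with $\|u_0\|_{H^\gamma}$, and, in the integral term, by $\|x^\beta(Vu(s))\|_{L^2}$; since $V$ is compactly supported, $\|x^\beta(Vu(s))\|_{L^2}\lesssim\|u(s)\|_{L^2}\lesssim\|u_0\|_{L^2}$, and the derivative-on-$V$ contributions are likewise bounded by lower-order Sobolev norms of $u(s)$. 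Collecting these and invoking Gr\"onwall once more in the quantity $\sum_{|\alpha|\le\gamma}\|x^\alpha u(t)\|_{L^2}$ gives the bound, with continuity in $t$ inherited from strong continuity of the group and dominated convergence in the Duhamel integral. One should check $\gamma\in\Z^+$ is used here so that $x^\alpha$ is a genuine polynomial weight and the binomial expansion of $(x+2it\nabla)^\alpha$ is finite; for general real $\gamma$ one would instead interpolate, but that is not needed.

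The main obstacle — really the only delicate point — is handling the coupling between the derivative count and the weight count: moving the polynomial weight $x^\alpha$ through $\tstd{it}$ converts weights into derivatives (costing regularity) while the potential term $Vu$, when hit by $x^\alpha$ and then differentiated, must be reorganized so that at most $\gamma$ derivatives land on $u$ and the rest on the smooth compactly supported $V$. Because $u\in H^\gamma$ is already available from the first step and $V\in C_c^\infty$, every resulting term is bounded by $\|u(s)\|_{H^\gamma}+\sum_{|\beta|\le\gamma}\|x^\beta u(s)\|_{L^2}$, so the two estimates close simultaneously in the combined norm $\|u(t)\|_{H^\gamma(\R^d)}+\|u(t)\|_{L^2(\R^d;|x|^{2\gamma}dx)}$. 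The rest is bookkeeping with multi-indices and a single application of Gr\"onwall's inequality; since the full argument is routine, the details are deferred to \ref{appendix} as stated.
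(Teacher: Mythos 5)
Your proposal is correct and takes essentially the same route as the paper: the appendix likewise treats the $H^\gamma$ well-posedness as standard and obtains the weighted bound from the Galilean operators $\Gamma_j=x_j+2it\partial_{x_j}=e^{it\Delta}x_j e^{-it\Delta}$ commuted through the Duhamel formula, using the smoothness and compact support of $V$, and closing in the combined norm (the paper formalizes your Gr\"onwall closure as a contraction in the space with norm $\sum_{|\beta|\le\gamma}\|\Gamma^\beta u\|_{L^\infty L^2}+\|u\|_{L^\infty H^\gamma}$, then iterates over time intervals). Only a cosmetic remark: the correct commutation is $e^{it\Delta}x_j=(x_j+2it\partial_{x_j})e^{it\Delta}$, so the sign in your stated $[x_j,e^{it\Delta}]$ is flipped, which affects no estimate.
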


\subsection{Bernstein's inequality}

It is known that Bernstein's inequality holds on $\mathbb{R}^d$ and the canonical torus, as shown {\em inter alia\/} in \cite[(A.2)-(A.6)]{Tao2006} and \cite[Theorem 2.2 and pp. 22]{Guo1998}. The results are summarized in the following lemma and will be used in the proof of Theorem \ref{theorem-lsp}. 

 \begin{lemma}[Bernstein's inequalities] \label{bound}
Let $s\ge0$.
For $f\in H^{ s}(\R^d)$, it holds that
\begin{align*}
& \|P_{\le N}J^s f\|_{L^2(\R^d)}  \lesssim N^{s}  \| f\|_{L^2(\R^d)},\qquad
 \|(1-P_{\le N}) f\|_{L^2(\R^d)} \lesssim N^{-s} \|J^{s}  f\|_{L^2(\R^d)}.
\end{align*}
 Moreover, for $f\in H^{s}( \Omega_1)$ we have
 \begin{align*}
 & \|P_{1,N} J^s_1 f\|_{L^2(\Omega_1)}  \lesssim {N}^{s}  \| f\|_{L^2(\Omega_1)},\qquad
 \|(1-P_{1,N}) f\|_{L^2(\Omega_1)} \lesssim {N}^{-s} \|J_1^{s}  f\|_{L^2(\Omega_1)}.
\end{align*}
 \end{lemma}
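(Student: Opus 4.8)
The plan is to reduce everything to Fourier-side computations, first on $\R^d$ and then on the unit torus $\Omega_1$, and in the periodic case to pass between $|k|_\infty$ and $|k|$ via the trivial comparison $|k|_\infty \le |k| \le \sqrt{d}\,|k|_\infty$. For the continuous case, I would write $\|P_{\le N} J^s f\|_{L^2(\R^d)}^2 = \int_{|\xi|\le N} (1+|\xi|^2)^s |\mathcal{F}f(\xi)|^2\,d\xi$ by Plancherel, observe that on the region $|\xi|\le N$ one has $(1+|\xi|^2)^{s} \le (1+N^2)^s \lesssim N^{2s}$ for $N\ge 1$ (and for $N\le 1$ the cut-off region is contained in the unit ball, where the multiplier is bounded by a constant), and conclude $\|P_{\le N}J^s f\|_{L^2}^2 \lesssim N^{2s}\int_{|\xi|\le N} |\mathcal{F}f|^2 \le N^{2s}\|f\|_{L^2}^2$. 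For the second inequality I would similarly bound $\|(1-P_{\le N})f\|_{L^2}^2 = \int_{|\xi|>N} |\mathcal{F}f(\xi)|^2\,d\xi$, insert the factor $(1+|\xi|^2)^s/(1+|\xi|^2)^s$, and use that on $|\xi|>N$ one has $(1+|\xi|^2)^{-s} \le (1+N^2)^{-s} \lesssim N^{-2s}$, giving $\int_{|\xi|>N}|\mathcal{F}f|^2 \le N^{-2s}\int (1+|\xi|^2)^s|\mathcal{F}f|^2 = N^{-2s}\|J^s f\|_{L^2}^2$.

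For the periodic case on $\Omega_1 = [-1,1]^d$, the argument is the discrete analogue. Using Parseval's identity \eqref{Parseval} and the definitions \eqref{FT-Jsf} of $J^s_1$ together with the projection $P_{1,N}$ (which keeps modes with $|k|_\infty \le N$), I would write $\|P_{1,N}J^s_1 f\|_{L^2(\Omega_1)}^2 = 2^d \sum_{|k|_\infty \le N} (1+ \pi^2|k|^2)^s |\mathcal{F}_1 f(k)|^2$. On the index set $|k|_\infty \le N$ one has $|k| \le \sqrt{d}\,N$, hence $(1+\pi^2|k|^2)^s \le (1+ d\pi^2 N^2)^s \lesssim N^{2s}$ for $N\ge 1$, and this yields the first periodic estimate. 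For the second, $\|(1-P_{1,N})f\|_{L^2(\Omega_1)}^2 = 2^d\sum_{|k|_\infty > N} |\mathcal{F}_1 f(k)|^2$, and on $|k|_\infty > N$ one has $|k| \ge |k|_\infty > N$, so $(1+\pi^2|k|^2)^{-s} \le (1+\pi^2 N^2)^{-s} \lesssim N^{-2s}$; multiplying and dividing by $(1+\pi^2|k|^2)^s$ and using \eqref{FT-Jsf} again gives $\|(1-P_{1,N})f\|_{L^2(\Omega_1)}^2 \lesssim N^{-2s}\|J^s_1 f\|_{L^2(\Omega_1)}^2$.

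There is essentially no serious obstacle here; the lemma is a standard fact and the paper itself attributes it to \cite{Tao2006,Guo1998}. The only mild point requiring care is the uniformity of the implied constants: one must treat $N\ge 1$ and $0\le N < 1$ separately (or simply note that the statements are only used for $N$ large, as in the choice $N = L^2$), and one must be careful that the $\sqrt{d}$ factors relating $|k|_\infty$ to the Euclidean $|k|$ get absorbed into the dimension-dependent implicit constant rather than into $N$. Since the paper allows implicit constants to depend on the dimension $d$, this is harmless. I would therefore present the proof as the two Plancherel/Parseval computations above, remarking that the periodic estimates follow verbatim from the continuous ones with integrals replaced by sums over $\Z^d$ and the comparison $|k|_\infty \le |k| \le \sqrt d\,|k|_\infty$.
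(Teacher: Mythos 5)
Your proposal is correct. Note that the paper itself offers no proof of this lemma: it simply cites \cite{Tao2006} and \cite{Guo1998}, and your Plancherel/Parseval computation is precisely the standard argument behind those cited facts, so there is nothing to compare beyond saying you have filled in what the paper leaves to the references. The only point worth being precise about is the first inequality in each pair: since the inhomogeneous symbol $(1+|\xi|^2)^{s/2}$ (resp.\ $(1+\pi^2|k|^2)^{s/2}$) equals $1$ at the zero frequency, the bound $\lesssim N^s\|f\|_{L^2}$ genuinely requires $N\gtrsim 1$ rather than being fixable by a "bounded multiplier" remark for small $N$; this is harmless here because the lemma is only invoked with $N\ge 1$ (indeed $N=L^2$, $L\ge 1$), as you correctly observe.
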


To prove the Bernstein's inequality on the scaled torus, we first introduce a change of variable in space. 
We define $ m_L: \Omega_1 \to \Omega_L$ by
\begin{align*}
m_L(y) = Ly.
\end{align*}
The following lemma can be proved by direct computation and therefore its proof is omitted.
It reveals that the dilation only changes the Fourier bases, while keeping the generalized Fourier coefficients invariant. 
\begin{lemma}\label{lem:Fourier}
For $k\in \mathbb{Z}^d$ and $f\in L^2( \Omega_L)$, we have
$[{\mathcal{F}} _1 (f\circ m_L)](k) = [{\mathcal{F}} _L f](k)$ .
\end{lemma}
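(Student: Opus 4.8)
The plan is to reduce the claim to a single linear change of variables in the integral that defines the periodic Fourier coefficient. Recall from \eqref{defPeriodFourier} that, specialised to $L=1$, the coefficient of a function $g$ on $\Omega_1=(-1,1)^d$ is $\mathcal{F}_1 g(k) = \frac{1}{2^d}\int_{\Omega_1} g(y)\,e^{-i\pi k\cdot y}\,dy$, whereas for a function $f$ on $\Omega_L$ it is $\mathcal{F}_L f(k) = \frac{1}{(2L)^d}\int_{\Omega_L} f(x)\,e^{-i\pi k\cdot x/L}\,dx$. Applying the first formula to $g=f\circ m_L$, which is well-defined and square-integrable on $\Omega_1$ because $m_L(y)=Ly$ maps $\Omega_1$ bijectively onto $\Omega_L$ and rescales Lebesgue measure by the constant factor $L^d$, gives $\mathcal{F}_1(f\circ m_L)(k) = \frac{1}{2^d}\int_{\Omega_1} f(Ly)\,e^{-i\pi k\cdot y}\,dy$.

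Next I would perform the substitution $x=Ly$, so that $dx=L^d\,dy$, the domain $\Omega_1$ is carried to $\Omega_L$, and the phase transforms as $e^{-i\pi k\cdot y}=e^{-i\pi k\cdot x/L}$. This turns the integral into $\frac{1}{2^d}\int_{\Omega_L} f(x)\,e^{-i\pi k\cdot x/L}\,\frac{dx}{L^d} = \frac{1}{(2L)^d}\int_{\Omega_L} f(x)\,e^{-i\pi k\cdot x/L}\,dx$, which is exactly $\mathcal{F}_L f(k)$ by \eqref{defPeriodFourier}. Since $k\in\mathbb{Z}^d$ was arbitrary, this proves the identity.

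There is essentially no analytic obstacle: $m_L$ is an affine diffeomorphism, so the change of variables is valid for any $f\in L^1(\Omega_L)\supset L^2(\Omega_L)$ and no regularity beyond integrability is needed. The only point that deserves a line of care is the bookkeeping of constants — keeping the normalisations $2^d$ and $(2L)^d$ and the Jacobian $L^d$ consistent, and checking that the dilation rescales the fundamental period in each coordinate precisely so that the $k$-th Fourier mode on $\Omega_1$ pulls back to the $k$-th Fourier mode on $\Omega_L$ (this is what makes the \emph{same} index $k$ appear on both sides). With the lemma in hand, it can be combined with Parseval's identity \eqref{Parseval} and the $L=1$ Bernstein inequalities in Lemma~\ref{bound} to obtain the corresponding inequalities on the scaled torus $\Omega_L$, which is the use we have in mind.
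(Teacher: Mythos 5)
Your proof is correct: the change of variables $x=Ly$ with Jacobian $L^d$ and the matching of normalisations $2^d$, $(2L)^d$ is exactly the ``direct computation'' the paper alludes to when it omits the proof. Nothing is missing; the bookkeeping of the phase $e^{-i\pi k\cdot y}=e^{-i\pi k\cdot x/L}$ is the whole content of the lemma and you have it right.
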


\begin{lemma}\label{lem:sp-scaling}
For $L\ge 1$, we have $\| J_1^s (f\circ m_L)\|_{L^2( \Omega_1)} \lesssim  L^{s-d/2} \| J_L ^s f\|_{L^2( \Omega_L)}$.
\end{lemma}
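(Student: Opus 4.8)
The plan is to reduce everything to Parseval's identity on the two tori and apply Lemma~\ref{lem:Fourier}. First I would write out both sides using the Fourier-coefficient formulas. By \eqref{FT-Jsf} on the unit torus $\Omega_1$ (i.e.\ with $L=1$),
\begin{align*}
\| J_1^s (f\circ m_L)\|_{L^2(\Omega_1)}^2 = 2^d \sum_{k\in\Z^d} (1+|k|^2\pi^2)^s \, \bigl|[{\mathcal{F}}_1(f\circ m_L)](k)\bigr|^2,
\end{align*}
while by \eqref{FT-Jsf} on $\Omega_L$,
\begin{align*}
\| J_L^s f\|_{L^2(\Omega_L)}^2 = (2L)^d \sum_{k\in\Z^d} (1+|k|^2\pi^2/L^2)^s \, \bigl|[{\mathcal{F}}_L f](k)\bigr|^2.
\end{align*}
By Lemma~\ref{lem:Fourier} the coefficients $[{\mathcal{F}}_1(f\circ m_L)](k)$ and $[{\mathcal{F}}_L f](k)$ coincide for every $k$, so the two sums differ only through the symbols $(1+|k|^2\pi^2)^s$ versus $(1+|k|^2\pi^2/L^2)^s$ and the prefactors $2^d$ versus $(2L)^d$.

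Next I would compare the two symbols termwise. Since $L\ge 1$, we have $|k|^2\pi^2/L^2 \le |k|^2\pi^2$, hence
\begin{align*}
1+|k|^2\pi^2 \le L^2\bigl(1+|k|^2\pi^2/L^2\bigr),
\end{align*}
and therefore $(1+|k|^2\pi^2)^s \le L^{2s}(1+|k|^2\pi^2/L^2)^s$ for $s\ge 0$. Plugging this bound into the sum for $\| J_1^s(f\circ m_L)\|_{L^2(\Omega_1)}^2$ and using the equality of Fourier coefficients gives
\begin{align*}
\| J_1^s (f\circ m_L)\|_{L^2(\Omega_1)}^2 \le 2^d L^{2s} \sum_{k\in\Z^d} (1+|k|^2\pi^2/L^2)^s \, \bigl|[{\mathcal{F}}_L f](k)\bigr|^2 = L^{2s} \cdot \frac{2^d}{(2L)^d}\,\| J_L^s f\|_{L^2(\Omega_L)}^2 = L^{2s-d}\,\| J_L^s f\|_{L^2(\Omega_L)}^2.
\end{align*}
Taking square roots yields $\| J_1^s(f\circ m_L)\|_{L^2(\Omega_1)} \le L^{s-d/2}\,\| J_L^s f\|_{L^2(\Omega_L)}$, which is the claimed inequality (indeed with constant $1$).

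There is essentially no genuine obstacle here; the argument is a bookkeeping exercise in tracking the dilation through Parseval's identity. The only point that requires a little care is ensuring the symbol comparison goes in the right direction: one must use $L\ge 1$ to bound $1+|k|^2\pi^2$ by $L^2$ times $1+|k|^2\pi^2/L^2$ uniformly in $k$ (the constant term $1$ is handled because $1\le L^2\cdot 1$), and one must keep the volume prefactors $2^d$ and $(2L)^d$ straight so that the net power of $L$ comes out as $2s-d$ under the square. If one preferred, the same computation could be organized as a direct change of variables $x=Ly$ in the physical-space integrals together with the Fourier-multiplier definition of $J_1^s$ and $J_L^s$, but the Parseval route above is the cleanest.
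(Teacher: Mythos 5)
Your proof is correct and follows essentially the same route as the paper's: Parseval's identity on both tori, the invariance of Fourier coefficients under dilation (Lemma~\ref{lem:Fourier}), and the pointwise symbol bound $(1+|k|^2\pi^2)^{s}\le L^{2s}(1+|k|^2\pi^2/L^2)^{s}$ for $L\ge 1$, followed by bookkeeping of the volume factors. No substantive differences to note.
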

\begin{proof}
By Parseval's equality \eqref{Parseval} and Lemma~\ref{lem:Fourier}, we obtain
\begin{align*}
\| J_1^s (f\circ m_L)\|_{L^2( \Omega_1)} 
&= 2^{\frac d2}\| { (1+ |k|^2\pi^2)^{\frac s2}\mathcal{F}} _1(f\circ m_L)(k)\|_{l^2}\\
&\le 2^{\frac d2}\| { L^s(1+ (|k|\pi^2/L)^2)^{\frac s2}\mathcal{F}} _L(f)(k)\|_{l^2}
= L^{s-\frac d2} \| J_L^s f\|_{L^2( \Omega_L)} , 
\end{align*}
where the last equality follows from \eqref{FT-Jsf}. 
\end{proof}

%

\begin{lemma}[Approximation result and Bernstein's inequality on $ \Omega_L$]\label{lem:Bernstein}
For $L\ge 1$ and $f\in H^s ( \Omega_L)$ with $s\ge m$, we have
\begin{align*}
 \| f - \tPN f\|_{L^2( \Omega_L)} &\lesssim ({N}L^{-1})^{-s} \| f\|_{H^s( \Omega_L)},\\
 \| \tPN f\|_{H^s( \Omega_L)} &\lesssim \max \{1, ({ N}L^{-1})^{s-m}\}  \| \tPN f\|_{H^m( \Omega_L)}.
\end{align*}
\end{lemma}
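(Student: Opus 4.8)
The plan is to transport both inequalities from the unit torus $\Omega_1$ — where they are already recorded in Lemma~\ref{bound} — to the scaled torus $\Omega_L$ via the dilation $m_L$, using the scaling identities from Lemmas~\ref{lem:Fourier} and \ref{lem:sp-scaling}. The key observation is that, by Lemma~\ref{lem:Fourier}, the low-frequency projection commutes with dilation in the sense that $(\tPN f)\circ m_L = P_{1,N}(f\circ m_L)$, since $\tPN$ and $P_{1,N}$ both truncate the (common) Fourier coefficients at $|k|_\infty\le N$. Likewise $(1-\tPN)f$ dilates to $(1-P_{1,N})(f\circ m_L)$. So the strategy is: pull back to $\Omega_1$, apply the unit-torus estimate, and push the norms back to $\Omega_L$, tracking powers of $L$ through the change of variables.

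For the first (approximation) estimate, I would write $\|f-\tPN f\|_{L^2(\Omega_L)} = L^{d/2}\|(f-\tPN f)\circ m_L\|_{L^2(\Omega_1)} = L^{d/2}\|(1-P_{1,N})(f\circ m_L)\|_{L^2(\Omega_1)}$, where the first equality is the elementary change-of-variables scaling for the $L^2$ norm. The unit-torus Bernstein inequality from Lemma~\ref{bound} bounds this by $L^{d/2}N^{-s}\|J_1^s(f\circ m_L)\|_{L^2(\Omega_1)}$, and then Lemma~\ref{lem:sp-scaling} gives $\|J_1^s(f\circ m_L)\|_{L^2(\Omega_1)}\lesssim L^{s-d/2}\|J_L^s f\|_{L^2(\Omega_L)}$. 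Multiplying the powers of $L$ together yields $L^{d/2}\cdot N^{-s}\cdot L^{s-d/2} = (NL^{-1})^{-s}$ times $\|J_L^s f\|_{L^2(\Omega_L)}\le\|f\|_{H^s(\Omega_L)}$, which is exactly the claimed bound. I should double-check that Lemma~\ref{lem:sp-scaling} as stated gives an inequality in the needed direction (it does: $\|J_1^s(f\circ m_L)\|\lesssim L^{s-d/2}\|J_L^s f\|$ for $L\ge1$), so no reverse inequality is required here.

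For the second (inverse Bernstein) estimate, the same dictionary applies: I want to bound $\|\tPN f\|_{H^s(\Omega_L)}$, i.e.\ (comparably) $\|J_L^s\tPN f\|_{L^2(\Omega_L)}$, in terms of $\|J_L^m\tPN f\|_{L^2(\Omega_L)}$. The cleanest route is to work directly with the Fourier-side expressions \eqref{FT-Jsf}: since $\tPN f$ has coefficients supported on $|k|_\infty\le N$, the multiplier $(1+|k|^2\pi^2/L^2)^{(s-m)/2}$ is bounded on that support by a constant times $\max\{1,(N\pi/L)^{s-m}\}\asymp\max\{1,(NL^{-1})^{s-m}\}$ (using $s\ge m$), whence $\|J_L^s\tPN f\|_{L^2(\Omega_L)}\lesssim\max\{1,(NL^{-1})^{s-m}\}\|J_L^m\tPN f\|_{L^2(\Omega_L)}$. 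Passing from $\|J_L^s\cdot\|_{L^2}$ to the $H^s$-norm is harmless since the two are equivalent (with $L$-independent constants because $L\ge1$ keeps $1+|k|^2\pi^2/L^2\gtrsim1$). One could equally pull back to $\Omega_1$ and invoke Lemma~\ref{bound}, but the direct Fourier computation is shorter and keeps the $L$-dependence transparent.

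The only mild obstacle is bookkeeping: making sure all the powers of $L$ from the change of variables ($L^{d/2}$ for the $L^2$ norm, $L^{s-d/2}$ from Lemma~\ref{lem:sp-scaling}) cancel correctly to leave the clean factor $(NL^{-1})^{-s}$, and checking that the hypothesis $L\ge1$ is genuinely used (it is, both in Lemma~\ref{lem:sp-scaling} and to keep the Bessel-potential/Sobolev norm equivalence uniform). There is no analytic difficulty beyond that; the content is entirely in the scaling dictionary established by the preceding lemmas.
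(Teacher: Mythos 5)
Your proposal is correct and follows essentially the same route as the paper: the approximation estimate is proved by pulling back to $\Omega_1$ via $m_L$, applying the unit-torus bound of Lemma~\ref{bound}, and using Lemma~\ref{lem:sp-scaling} so that the powers of $L$ combine to $(NL^{-1})^{-s}$, while the inverse (Bernstein) estimate is exactly the paper's direct Parseval/Fourier-multiplier computation on the frequency support $|k|_\infty\le N$. No substantive differences to report.
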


\begin{proof}
The first result can be obtained by applying Lemmas~\ref{bound}--\ref{lem:sp-scaling}, which imply that 
\begin{align*}
\| f - \tPN f\|_{L^2( \Omega_L)}  
&= L^{\frac d2}\| f\circ m_L - P_{1,N} (f\circ m_L)\|_{L^2( \Omega_1)} 
	\lesssim L^{\frac d2} {N}^{-s} \| f\circ m_L\|_{H^s( \Omega_1)}\\
&\le (L^{-1} {N})^{-s} \| f\|_{H^s( \Omega_L)}.
\end{align*}
The second result follows from Parseval's equality, i.e., 
\begin{align*}
\| \tPN f\|_{H^s( \Omega_L)} 
&= (2L)^{d/2} \left\|\sum_{|k|_{\infty}\le N}
	\left(1+ \Big(\frac{\pi}{ L}\Big)^2 |k|^2 \right)^{s/2}  {\mathcal{F}} _L f(k) \right\|_{l^2_k} \\
&\lesssim (2L)^{d/2} (1+ N/ L)^{s-m}  \left\|\sum_{|k|_{\infty}\le N}
	\left(1+ \Big(\frac{\pi}{ L}\Big)^2 |k|^2 \right)^{m/2}  {\mathcal{F}} _L f(k) \right\|_{l^2_k} \\
&\lesssim 
(1+ N/ L)^{s-m} \| \tPN f\|_{H^m( \Omega_L)} . 
\end{align*}
This completes the proof of Lemma \ref{lem:Bernstein}. 
\end{proof}
\begin{remark}\label{rem:bern}
\upshape 
The following approximation results can be obtained in the same way: 
\begin{align*}
 \| f - \tPN f\|_{L^2( \Omega_L)} &\lesssim ( NL^{-1})^{-s} \| |\nabla|_L^s f\|_{L^2( \Omega_L)},\\
  \| f - I_{L,N} f\|_{L^2( \Omega_L)} &\lesssim ( NL^{-1})^{-s} \|  f\|_{H^s( \Omega_L)}, \quad s>d/2.
 \end{align*}
\end{remark}

\subsection{Kato--Ponce inequalities}
The following version of the Kato--Ponce inequalities will be used in the proof of Theorem \ref{theorem-lsp}. 
\begin{lemma} [The Kato--Ponce inequality \cite{kato_commutator_1988, grafakos_kato-ponce_2014}]\label{lem:kato-Ponce} 
For any given $\gamma>0$, the following inequalities hold for $ \Omega =  \Omega_L$ and $\R^d${\rm:}
\begin{align*}
\|J^\gamma (fg)\|_{L^2( \Omega)}&\lesssim 
\| J^ \gamma f\|_{L^\infty ( \Omega)} \| g\|_{L^2( \Omega)} + 
\| f\|_{L^\infty( \Omega)} \| J^ \gamma g\|_{L^2( \Omega)},\\
\||\nabla|^\gamma (fg)\|_{L^2( \Omega)}&\lesssim 
\| |\nabla|^\gamma f\|_{L^\infty ( \Omega)} \| g\|_{L^2( \Omega)} + 
\| f\|_{L^\infty( \Omega)} \| |\nabla|^\gamma g\|_{L^2( \Omega)}.
\end{align*}
\end{lemma}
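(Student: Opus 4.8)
The plan is to first strip off the inhomogeneous symbol, reducing both inequalities to the single homogeneous fractional Leibniz estimate for $|\nabla|^\gamma$, and then to prove the latter by Bony's paraproduct decomposition. For the reduction, Plancherel's theorem on $\R^d$ (respectively Parseval's identity \eqref{Parseval} on $\Omega_L$) together with the elementary symbol bound $(1+|\xi|^2)^\gamma \lesssim 1+|\xi|^{2\gamma}$ (on the torus $(1+|k|^2\pi^2/L^2)^\gamma\lesssim 1+(|k|\pi/L)^{2\gamma}$, with constant independent of $L$) gives $\|J^\gamma h\|_{L^2(\Omega)}\lesssim \|h\|_{L^2(\Omega)}+\||\nabla|^\gamma h\|_{L^2(\Omega)}$ for every $h$. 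Taking $h=fg$ and bounding $\|fg\|_{L^2}\le \|f\|_{L^\infty}\|g\|_{L^2}$ shows that the $J^\gamma$ inequality follows from the $|\nabla|^\gamma$ inequality, so it suffices to establish the homogeneous estimate.

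For the homogeneous estimate on $\R^d$ I would use the Littlewood--Paley projections $P_j$ onto the dyadic shell $|\xi|\sim 2^j$ and the low-pass operators $S_j=\sum_{j'<j}P_{j'}$, writing
\[
 fg=\sum_j S_{j-2}f\,P_j g+\sum_j P_j f\,S_{j-2}g+\sum_j P_j f\,\widetilde P_j g=:\Pi_1+\Pi_2+\Pi_3,\qquad \widetilde P_j=\sum_{|j'-j|\le 2}P_{j'}.
\]
Two of these pieces are elementary. Each block $S_{j-2}f\,P_jg$ has spectrum in $|\xi|\sim 2^j$, so the blocks are almost orthogonal and $|\nabla|^\gamma$ acts like $2^{j\gamma}$; pulling out $|S_{j-2}f|\le\|f\|_{L^\infty}$ pointwise and using the square-function characterization $\|(\sum_j 2^{2j\gamma}|P_jg|^2)^{1/2}\|_{L^2}\sim \||\nabla|^\gamma g\|_{L^2}$ yields $\||\nabla|^\gamma\Pi_1\|_{L^2}\lesssim \|f\|_{L^\infty}\||\nabla|^\gamma g\|_{L^2}$. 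For the resonant term $\Pi_3$ the product frequency may be low, so I would instead project the output onto shells: $P_l(P_jf\,\widetilde P_jg)$ vanishes unless $j\gtrsim l$, hence $2^{l\gamma}\|P_l\Pi_3\|_{L^2}\lesssim \||\nabla|^\gamma f\|_{L^\infty}\sum_{j\gtrsim l}2^{(l-j)\gamma}\|\widetilde P_jg\|_{L^2}$ after writing $\|P_jf\|_{L^\infty}\lesssim 2^{-j\gamma}\||\nabla|^\gamma f\|_{L^\infty}$. This is a discrete convolution against the geometric kernel $2^{(l-j)\gamma}$, which lies in $\ell^1$ \emph{precisely because} $\gamma>0$; Young's inequality in $\ell^2_l$ then gives $\||\nabla|^\gamma\Pi_3\|_{L^2}\lesssim\||\nabla|^\gamma f\|_{L^\infty}\|g\|_{L^2}$. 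The hypothesis $\gamma>0$ enters here and nowhere else.

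The main obstacle is the remaining high--low paraproduct $\Pi_2=\sum_j P_jf\,S_{j-2}g$, in which the differentiated factor $f$ sits in $L^\infty$ but the undifferentiated factor $g$ is only in $L^2$. Here the naive square-function argument fails: after moving the derivative onto $f$ one is left with $\|(\sum_j|P_j|\nabla|^\gamma f|^2)^{1/2}\|_{L^\infty}$, and the Littlewood--Paley square function is \emph{not} bounded on $L^\infty$. This is exactly the delicate $L^\infty$ endpoint, and it requires the genuine paraproduct estimate $\|\sum_j (P_j a)(S_{j-2}g)\|_{L^2}\lesssim \|a\|_{\mathrm{BMO}}\|g\|_{L^2}\le\|a\|_{L^\infty}\|g\|_{L^2}$ with $a=|\nabla|^\gamma f$, proved via the Carleson-measure characterization of $\mathrm{BMO}$ (equivalently the Coifman--Meyer multiplier theorem); this is the content of the cited references \cite{kato_commutator_1988, grafakos_kato-ponce_2014}.

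Finally, for the torus I would reduce $\Omega_L$ to the fixed torus $\Omega_1$ using the scaling $m_L(y)=Ly$ and Lemma~\ref{lem:Fourier}: since $(|\nabla|^\gamma_L(fg))\circ m_L=L^{-\gamma}|\nabla|^\gamma_1((f\circ m_L)(g\circ m_L))$ while $\|\cdot\|_{L^\infty}$ is scale-invariant and $\|\cdot\|_{L^2(\Omega_L)}=L^{d/2}\|\cdot\circ m_L\|_{L^2(\Omega_1)}$, both sides of the homogeneous inequality scale by the common factor $L^{d/2-\gamma}$, so the estimate on $\Omega_L$ is equivalent to the one on $\Omega_1$ with an $L$-independent constant (after the $J^\gamma\!\to\!|\nabla|^\gamma$ reduction has already been carried out directly on $\Omega_L$ via \eqref{Parseval}). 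On the fixed torus $\Omega_1$ the same paraproduct argument runs verbatim with the periodic Littlewood--Paley decomposition built from the frequencies $k\pi$ and the periodic Bernstein inequalities of Lemma~\ref{bound}; alternatively one may invoke a transference principle from the $\R^d$ result, again as in \cite{grafakos_kato-ponce_2014}.
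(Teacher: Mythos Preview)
The paper does not prove this lemma at all: it is stated as a known result with citations to \cite{kato_commutator_1988, grafakos_kato-ponce_2014} and then used as a black box. Your proposal therefore goes well beyond what the paper does, supplying an outline of the standard paraproduct proof that one would find in those references. The decomposition into $\Pi_1,\Pi_2,\Pi_3$, the square-function treatment of $\Pi_1$, the geometric $\ell^1$ kernel for $\Pi_3$ (correctly identifying that this is where $\gamma>0$ is used), the Carleson/BMO paraproduct bound for the delicate high--low piece $\Pi_2$, and the scaling reduction from $\Omega_L$ to $\Omega_1$ are all correct and standard.

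One small point you gloss over in the reduction from $J^\gamma$ to $|\nabla|^\gamma$: after applying the homogeneous estimate you are left with $\||\nabla|^\gamma f\|_{L^\infty}\|g\|_{L^2}$ on the right-hand side, and to match the stated $J^\gamma$ inequality you still need $\||\nabla|^\gamma f\|_{L^\infty}\lesssim \|J^\gamma f\|_{L^\infty}$. This is true---the multiplier $|\xi|^\gamma(1+|\xi|^2)^{-\gamma/2}$ has an $L^1$ convolution kernel (split into a compactly supported piece with a $|\xi|^\gamma$ singularity, whose transform decays like $|x|^{-d-\gamma}$ at infinity, and a smooth high-frequency piece of symbol order $-2$)---but it is not the one-line consequence of Plancherel that handles the $L^2$ factors, and it is worth flagging.
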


\medskip

\section{Proof of Theorem \ref{theorem-lsp}}\label{section:truncated}

The proof of Theorem \ref{theorem-lsp} is divided into four subsections, which reflect the fundamental framework for the analyzing such problems.
 
\subsection{Approximation result}
The following theorem demonstrates the accuracy of approximating a function in $\R^d$ by finite Fourier series on the scaled torus $ \Omega_L$.

\begin{theorem}\label{thm:approx}
There exists a constant $C$ independent of $L$ and $a$ such that the following estimate holds for any $f\in H^ \gamma(\R^d) \cap L^2(\R^d;|x|^{2 \gamma} d x)${\rm:}
\begin{align*}
\| f - E\tPN \chi_{a,L} f\|_{L^2(\R^d)} 
&\le 
C(aL)^{- \gamma} \| |x|^ \gamma f\|_{L^2( \R^d)} 
	+ C (1+ ((1-a)L)^{- \gamma})(\NL)^{- \gamma}  \| f\|_{H^{ \gamma}(\R^d)},
\end{align*}
where $E$ denotes the zero-extension from $L^2( \Omega_L)$ to $L^2(\R^d)$.
\end{theorem}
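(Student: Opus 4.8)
The plan is to split the error by inserting the cut-off function and to treat the two resulting pieces independently: the far-field tail of $f$ outside the region where $\chi_{a,L}\equiv 1$, and the Fourier-truncation error on $\Omega_L$ of the function $\chi_{a,L}f$, which — being compactly supported in the interior of $\Omega_L$ — is a genuinely periodic $H^\gamma$ function. Writing
\begin{align*}
f - E\tPN\chi_{a,L}f = (1-\chi_{a,L})f + \big(\chi_{a,L}f - E\tPN\chi_{a,L}f\big),
\end{align*}
where in the second bracket I use that $\chi_{a,L}f$, being supported in $\Omega_L$, coincides with $E(\chi_{a,L}f|_{\Omega_L})$, the estimate will follow from bounding the two terms by $C(aL)^{-\gamma}\||x|^\gamma f\|_{L^2}$ and $C(1+((1-a)L)^{-\gamma})(\NL)^{-\gamma}\|f\|_{H^\gamma}$ respectively.

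For the tail term: since $\chi_{a,L}(x)=\chi_{a,1}(x/L)$ equals $1$ on $(-aL,aL)^d$ and takes values in $[0,1]$, the factor $1-\chi_{a,L}$ is bounded by $1$ and supported in $\{|x|_\infty\ge aL\}\subset\{|x|\ge aL\}$, so pointwise $|(1-\chi_{a,L})f|\le (aL)^{-\gamma}|x|^\gamma|f|$; integrating the square gives $\|(1-\chi_{a,L})f\|_{L^2(\R^d)}\le (aL)^{-\gamma}\||x|^\gamma f\|_{L^2(\R^d)}$, with constant exactly $1$, independent of $L$ and $a$.

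For the truncation term, I would use that the zero extension $E\colon L^2(\Omega_L)\to L^2(\R^d)$ is an isometry, so that $\|\chi_{a,L}f - E\tPN\chi_{a,L}f\|_{L^2(\R^d)}=\|\chi_{a,L}f - \tPN\chi_{a,L}f\|_{L^2(\Omega_L)}$, and then apply the approximation estimate of Lemma~\ref{lem:Bernstein} on $\Omega_L$ to get a bound $\lesssim (\NL)^{-\gamma}\|\chi_{a,L}f\|_{H^\gamma(\Omega_L)}$. Since $\chi_{a,L}f$ is compactly supported in the interior of $\Omega_L$, its periodic norm $\|\chi_{a,L}f\|_{H^\gamma(\Omega_L)}$ is equivalent, uniformly in $L\ge 1$ (via the Fourier-coefficient characterization \eqref{FT-Jsf}), to $\sum_{|\alpha|\le\gamma}\|\partial^\alpha(\chi_{a,L}f)\|_{L^2(\R^d)}=\|\chi_{a,L}f\|_{H^\gamma(\R^d)}$. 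It then remains to estimate the latter: choosing $\chi_{a,1}$ as a dilation of a fixed bump with transition layer of width comparable to $1-a$, one has $\|\partial^\alpha\chi_{a,1}\|_{L^\infty}\lesssim (1-a)^{-|\alpha|}$, hence by $\partial^\alpha\chi_{a,L}(x)=L^{-|\alpha|}(\partial^\alpha\chi_{a,1})(x/L)$ also $\|\partial^\alpha\chi_{a,L}\|_{L^\infty}\lesssim((1-a)L)^{-|\alpha|}$; expanding $\partial^\alpha(\chi_{a,L}f)$ by the Leibniz rule, using $\|\chi_{a,L}\|_{L^\infty}=1$ and $((1-a)L)^{-m}\le 1+((1-a)L)^{-\gamma}$ for $0\le m\le\gamma$, yields $\|\chi_{a,L}f\|_{H^\gamma(\R^d)}\lesssim (1+((1-a)L)^{-\gamma})\|f\|_{H^\gamma(\R^d)}$, with constant depending only on $\gamma$, $d$ and the fixed bump. (Alternatively, one may invoke the Kato--Ponce inequality of Lemma~\ref{lem:kato-Ponce} after bounding $\|J^\gamma\chi_{a,L}\|_{L^\infty}$ by the same scaling argument.) Combining the three bounds gives the claim.

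The only step requiring real care, rather than routine bookkeeping, is the final one: tracking the joint dependence of the derivatives of $\chi_{a,L}$ on both $a$ and $L$ so that the factor comes out precisely as $((1-a)L)^{-\gamma}$ and no worse, and confirming that the cut-off can be chosen with $\|\partial^\alpha\chi_{a,1}\|_{L^\infty}\lesssim(1-a)^{-|\alpha|}$ with a constant independent of $a$. A secondary technical point is the uniform-in-$L$ identification of the periodic $H^\gamma(\Omega_L)$ norm of the interior-supported function $\chi_{a,L}f$ with its $H^\gamma(\R^d)$ norm, which is where $L\ge 1$ is used.
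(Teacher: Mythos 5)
Your proposal is correct and follows essentially the same route as the paper: the identical splitting into the far-field tail (bounded by $(aL)^{-\gamma}\,\| |x|^\gamma f\|_{L^2(\R^d)}$ via the support of $1-\chi_{a,L}$) and the Fourier-truncation error on $\Omega_L$, handled by the Bernstein/approximation lemma on the scaled torus, transfer to the whole-space norm using that $\chi_{a,L}f$ is supported in the interior of $\Omega_L$, a product estimate for $\chi_{a,L}f$, and the scaling bound $\|\partial^\alpha\chi_{a,L}\|_{L^\infty}\lesssim((1-a)L)^{-|\alpha|}$. The only (inessential) difference is that you use the integer-order Leibniz rule together with the Fourier-coefficient equivalence of the periodic and whole-space $H^\gamma$ norms, where the paper compares $\||\nabla|_L^\gamma(\chi_{a,L}f)\|_{L^2(\Omega_L)}$ with $\||\nabla|^\gamma(\chi_{a,L}f)\|_{L^2(\R^d)}$ (also using that $\gamma$ is a positive integer) and then applies the Kato--Ponce inequality, an alternative you yourself note.
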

\begin{proof}
By the triangle inequality and Bernstein's inequality in Remark \ref{rem:bern} on the scaled torus, we obtain
\begin{align*}
\|f- E\tPN \chi_{a,L} f\|_{L^2(\R^d)} 
&\le 
\|(\chi_{a,L} - 1) f\|_{L^2(\R^d)} 
+ \| (1-\tPN)\chi_{a,L} f\|_{L^2( \Omega_L)} \\
&\le
\|(\chi_{a,L} - 1) f\|_{L^2( \Omega_{aL}^ c)}
+ (\NL)^{- \gamma} \| |\nabla|_L^ \gamma (\chi_{a,L} f)\|_{L^2( \Omega_L)}.
\end{align*}
Since $1\le (aL)^{-1} |x|$ for $x\in \Omega_{aL}^ c$, it follows that 
\begin{align*}
\|(\chi_{a,L} - 1) f\|_{L^2( \Omega_{aL}^ c)}\le C(aL)^{- \gamma} \| |x|^ \gamma f\|_{L^2( \R^d)}.
\end{align*}
By using \eqref{def:nab}, we have
\begin{align*}
\| |\nabla|_L^ \gamma  (\chi_{a,L} f) \|_{L^2( \Omega_L)}^2
&= (2L)^d(\pi/L)^{2\gamma}\sum_k \Big(\sum_{i = 1}^d{k_i^2}\Big)^ \gamma |{\mathcal{F}}_L f(k)|^2\\
&\lesssim 
(2L)^d(\pi/L)^{2 \gamma}\sum_k 
\sum_{i = 1}^d{k_i^{2 \gamma}} |{\mathcal{F}}_L  (\chi_{a,L} f)(k)|^2\\
&= 
\sum_{i = 1}^d \| \partial_i^ \gamma  (\chi_{a,L} f)\|_{L^2( \Omega_L)}^2
\lesssim 
\sum_{i = 1}^d \| \partial_i^ \gamma  (\chi_{a,L} f)\|_{L^2( \mathbb{R}^d)}^2
\lesssim \| |\nabla|^ \gamma  (\chi_{a,L} f)\|_{L^2( \mathbb{R}^d)}^2 , 
\end{align*}
where the second to last inequality holds because $\gamma$ is a positive integer. 
By further employing the Kato--Ponce inequality in Lemma \ref{lem:kato-Ponce}, we obtain
\begin{align*}
&(\NL)^{- \gamma} \| |\nabla|_L^ \gamma (\chi_{a,L} f)\|_{L^2( \Omega_L)} 
\le (\NL)^{- \gamma} \| |\nabla|^ \gamma (\chi_{a,L} f)\|_{L^2( \R^d)} \\
&\le C
(\NL)^{- \gamma} \Big(\| |\nabla|^ \gamma \chi_{a,L}\|_{L^{\infty}(\R^d)} 
 \| f\|_{L^{2}(\R^d)}
 +\|\chi_{a,L}\|_{L^{\infty}(\R^d)} \| |\nabla|^ \gamma f\|_{L^{2}(\R^d)}
\Big).
\end{align*}
By the scaling property, we have 
\begin{align*}
\| |\nabla|^\gamma \chi_{a,L}\|_{L^{\infty}(\R^d)}\lesssim ((1-a)L)^{- \gamma}.
\end{align*} 
This, together with the Sobolev embedding result $H^{ 0+}\hookrightarrow L^{2+}$, implies that 
\begin{align*}
(\NL)^{- \gamma} \| |\nabla|_L^ \gamma (\chi_{a,L} f)\|_{L^2( \Omega_L)}
&\le C(1+ ((1-a)L)^{- \gamma}) (\NL)^{- \gamma}  \| J^ \gamma f\|_{L^{2}(\R^d)},
\end{align*}
This completes the proof of Theorem \ref{thm:approx}. 
\end{proof}

\begin{remark}
\upshape 
In particular, by choosing $L = N^{1/2}$, we obtain
\begin{equation*}
\| f - E\tPN \chi_{a,L} f\|_{L^2(\R^d)} 
\le 
CN^{-\frac \gamma 2} 
\Big(a^{- \gamma} \| |x|^ \gamma f\|_{L^2( \R^d)}
	+ \big(1+ ((1-a)N^{\frac12})^{- \gamma}\big)\| f\|_{H^{ \gamma}(\R^d)}\Big).
\end{equation*}
\end{remark}

\subsection{Truncated problem and remainders}\label{section:remainders}
In this section, we first derive the equation satisfied by \( u_L = \chi_L u|_{\Omega_L} \),
where $\chi_L = \chi_{1/2,L}$ for simplicity of  notation. 
This equation takes the form of a periodic Schrödinger-type equation with a perturbation term. For simplicity, we will omit the notation for domain confinement. For example, \( \chi_L u \) will refer either to the function in \( L^2(\mathbb{R}^d) \) or its restriction to \( L^2(\Omega_L) \), depending on the context. By direct calculation, we derive
\begin{align*}
i \partial_t (\chi_L u) + \Delta (\chi_L  u)
&= \chi_L (i \partial_t  u + \Delta u) + 2  \nabla \chi_L  \nabla u
+  \Delta \chi_L u \\
&= \chi_L Vu + 2 \nabla \chi_L \nabla u + \Delta\chi_L u,
\end{align*}
where the last line is obtained by utilizing equaiton \eqref{model-lsp}.
Thus, $u_L=\chi_L u|_{\Omega_L}$ satisfies the following Schr\"odinger-type equation on $ \Omega_L$: 
\begin{align}\label{PDE:boundeddomain}
i \partial_t u_L+ \Delta u_L =\chi_L V u_L+ R,\quad 
u_L(0) = \chi_L u_0,
\end{align}
with periodic boundary condition, where 
\begin{align}\label{def:r}
R = \chi_L Vu- \chi_L^2 V u+ 2 \nabla \chi_L  \nabla u + \Delta\chi_L  u.
\end{align}
By Duhamel's formula, the solution $u_L$ of equation \eqref{PDE:boundeddomain}  satisfies 
\begin{eqnarray}\label{equ:solu}
u_L(t_{n+1}) &=& \tstd{ i\tau} u_L(t_n)
- i \int_{t_n}^{t_{n+1}} \tstd{i(t_{n+1}-s)} \big[\chi_L V u_L(s) + R(s) \big] ds\notag\\
&=& \tstd{ i\tau} u_L(t_n)
- i \int_0^\tau \tstd{i(\tau -s)} \big[\chi_L V u_L(t_n+s)\big] ds\nonumber\\
&&\mbox{}- i \int_{t_n}^{t_{n+1}} \tstd{i(t_{n+1}-s)}  R(s) ds.
\end{eqnarray}
Applying the high- and low-frequency decomposition results in
\begin{align*}
u_L(t_{n+1}) &= \tstd{ i\tau} u_L(t_n)
- i \int_0^\tau \tstd{i(\tau -s)} \tPN \big[I_{L,N}(\chi_L V) \tPN u_L(t_n+s)\big] ds- R_1^n-R_2^n,
\end{align*}
where the remainders $R_i^n$, $i=1,2$, are defined as
\begin{eqnarray*}
R_1^n&=&i \int_{t_n}^{t_{n+1}} \tstd{i(t_{n+1}-s)}  R(s) ds,\\
R_2^n&=&i \int_0^\tau \tstd{i(\tau -s)} \big[\chi_L V u_L(t_n+s)\big] ds\\
   &&\mbox{}- i \int_0^\tau \tstd{i(\tau -s)} \tPN \big[I_{L,N}(\chi_L V) \tPN u_L(t_n+s)\big] ds.
\end{eqnarray*}
 Applying the approximation $u_L(t_n + s) \approx u_L(t_n)$ of the integrand and integrating out
$\tstd{i(\tau -s)}$ with respect to $s$, we obtain
\begin{align*}
u_L(t_{n+1}) &= \tstd{ i\tau} u_L(t_n) 
   -i \tau \varphi_1(i \tau \Delta) \tPN ( I_{L,N}(\chi_L V)\cdot  u_L(t_n))-\sum_{i=1}^3R_i^n, 
\end{align*}
where $\varphi_1(z) = \frac{e^z -1}{z}$ and 
\begin{align*}
 R_3^n&= i \int_0^\tau \tstd{i(\tau -s)} \tPN \Big[I_{L,N}(\chi_L V)\cdot \tPN \big(u_L(t_n+s)-u_L(t_n)\big)\Big] ds.
\end{align*}

\subsection{Consistency errors}\qquad\\
\begin{lemma}
Let $u\in L^\infty(0,T; H^\gamma(\R^d))$ for some $ \gamma\in \Z^+ $. 
Then there exists a constant $C$ depending only on $\|u\|_{L^\infty(0,T; H^\gamma(\R^d))}$  such that 
\begin{align*}
\|u_L\|_{L^\infty(0,T; H^\gamma(\Omega_L))}\le C.
\end{align*}
\begin{proof}
By using the Kato--Ponce's inequality, it is straightforward to prove that, for $t\in[0,T]$, 
\begin{align*}
 \| J^\gamma_L (\chi_L u)\|_{L^2( \Omega_L)}
\le \| J^ \gamma \chi_L\|_{L^{\infty }(\R^d)} \|  u\|_{L^{2}(\R^d)}
	+ \| J^ \gamma  u\|_{L^2(\R^d)} \| \chi_L\|_{L^\infty(\R^d)}
\le C .
\end{align*}
\end{proof}
 \end{lemma}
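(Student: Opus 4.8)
The plan is to bound $\|u_L\|_{L^\infty(0,T;H^\gamma(\Omega_L))}$ by transferring the estimate from the whole-space norm of $u$ through the definition $u_L=\chi_L u|_{\Omega_L}$. Recall that $\chi_L=\chi_{1/2,L}$ is a smooth, compactly supported cut-off function built by scaling the fixed function $\chi_{1/2,1}$, so all its derivatives are uniformly bounded (indeed $\||\nabla|^\gamma\chi_L\|_{L^\infty}\lesssim L^{-\gamma}\lesssim 1$ for $L\ge1$, and in fact $\|\chi_L\|_{W^{\gamma,\infty}(\R^d)}\le C$ with $C$ independent of $L$). Since $\gamma$ is a positive integer, controlling $J^\gamma_L(\chi_L u)$ in $L^2(\Omega_L)$ reduces, via Parseval's identity \eqref{Parseval} and the comparison between $J^\gamma_L$ and $\sum_{|\alpha|\le\gamma}\partial^\alpha$ on the torus, to controlling the $L^2(\Omega_L)$ norms of $\partial^\alpha(\chi_L u)$ for multi-indices $|\alpha|\le\gamma$; and each such quantity is bounded by the corresponding quantity on all of $\R^d$ because $\chi_L u$ is supported in $\Omega_L$ and the restriction to $\Omega_L$ does not increase the $L^2$ norm.

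The key steps, in order, would be: (i) note $u\in L^\infty(0,T;H^\gamma(\R^d))$ by hypothesis, and since $\chi_L u$ has support in $\overline{\Omega_{L/2}}\subset\Omega_L$, the periodic Sobolev norm $\|J^\gamma_L(\chi_L u)\|_{L^2(\Omega_L)}$ is comparable to the whole-space norm $\|J^\gamma(\chi_L u)\|_{L^2(\R^d)}$ (this is essentially the integer-order equivalence of $|\nabla|^\gamma_L$ with derivatives already exploited in the proof of Theorem~\ref{thm:approx}); (ii) apply the Kato--Ponce inequality of Lemma~\ref{lem:kato-Ponce} on $\R^d$ to the product $\chi_L\cdot u$, giving
\begin{align*}
\|J^\gamma(\chi_L u)\|_{L^2(\R^d)}
\lesssim \|J^\gamma\chi_L\|_{L^\infty(\R^d)}\|u\|_{L^2(\R^d)}
+\|\chi_L\|_{L^\infty(\R^d)}\|J^\gamma u\|_{L^2(\R^d)};
\end{align*}
(iii) use the scaling bounds $\|J^\gamma\chi_L\|_{L^\infty(\R^d)}\le C$ and $\|\chi_L\|_{L^\infty(\R^d)}=1$, both uniform in $L\ge1$, together with $\|u\|_{H^\gamma(\R^d)}\le\|u\|_{L^\infty(0,T;H^\gamma(\R^d))}$; (iv) combine with the $L^2$ part, $\|\chi_L u\|_{L^2(\Omega_L)}\le\|u\|_{L^2(\R^d)}$, to conclude $\|u_L\|_{L^\infty(0,T;H^\gamma(\Omega_L))}\le C$ with $C$ depending only on $\|u\|_{L^\infty(0,T;H^\gamma(\R^d))}$.

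The only mildly delicate point is step (i): one must be sure that passing from the full-line Bessel potential $J^\gamma$ to the torus operator $J^\gamma_L$ on a compactly supported function costs nothing that depends on $L$. This is where using the integer character of $\gamma$ matters — one works with the Fourier-series characterization \eqref{FT-Jsf}, bounds $(1+\pi^2|k|^2/L^2)^\gamma$ by $C_\gamma\sum_{|\alpha|\le\gamma}(\pi|k|/L)^{2|\alpha|}$, recognizes the right-hand side as $\sum_{|\alpha|\le\gamma}\|\partial^\alpha(\chi_L u)\|_{L^2(\Omega_L)}^2$ via Parseval, and then extends each $\partial^\alpha(\chi_L u)$ by zero to $\R^d$ (legitimate since $\chi_L u$ and all its derivatives vanish near $\partial\Omega_L$) to compare with $\|J^\gamma(\chi_L u)\|_{L^2(\R^d)}^2$. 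I expect this bookkeeping, rather than any genuine analytic difficulty, to be the main obstacle, and it is precisely the step the authors compress into their one-line application of Kato--Ponce.
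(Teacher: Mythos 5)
Your proposal is correct and follows essentially the same route as the paper: an application of the Kato--Ponce inequality to $\chi_L u$ with the uniform-in-$L$ bounds on $\chi_L$ and its derivatives, your step (i) merely making explicit the comparison between $J^\gamma_L$ on $\Omega_L$ and $J^\gamma$ on $\R^d$ for a function supported strictly inside $\Omega_L$, which the paper's one-line proof leaves implicit. One minor slip: $\mathrm{supp}(\chi_{1/2,L}u)$ is contained in $\mathrm{supp}(\chi_{1/2,L})\subset(-L,L)^d$, not in $\overline{\Omega_{L/2}}$, but since this support is still at positive distance from $\partial\Omega_L$ your zero-extension and integration-by-parts bookkeeping goes through unchanged.
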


\begin{lemma}\label{lem:r0}
Let $u\in L^\infty(0,T; H^\gamma(\R^d)\cap L^2(\R^d;|x|^{2\gamma}dx))$ for some $ \gamma\in \Z^+ $. 
Then there exists a constant $C$ depending only on 
$\|u\|_{L^\infty(0,T; H^\gamma(\R^d)\cap L^2(\R^d;|x|^{2\gamma}dx))}$ such that the remainder $R$ defined in \eqref{def:r} satisfies the following estimate: 
\begin{align*}
\|R\|_{L^\infty(0,T; L^2(\Omega_L))}\le C L^{-\gamma} .
\end{align*}
\end{lemma}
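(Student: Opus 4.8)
The plan is to estimate each of the four terms in the definition \eqref{def:r} of $R = \chi_L V u - \chi_L^2 V u + 2\nabla\chi_L\cdot\nabla u + \Delta\chi_L\, u$ separately, using the scaling properties of $\chi_L = \chi_{1/2,L}$ and the spatial decay of $u$ encoded in $u \in L^2(\R^d;|x|^{2\gamma}dx)$. The first two terms combine into $(\chi_L - \chi_L^2)Vu = \chi_L(1-\chi_L)Vu$; since $\chi_L \equiv 1$ on $(-L/2,L/2)^d$, the factor $1-\chi_L$ is supported in $\Omega_{L/2}^c = \R^d \setminus (-L/2,L/2)^d$, and there $1 \le (L/2)^{-\gamma}|x|^\gamma$, so $\|\chi_L(1-\chi_L)Vu\|_{L^2(\Omega_L)} \lesssim \|V\|_{L^\infty} \, 2^\gamma L^{-\gamma}\||x|^\gamma u\|_{L^2(\R^d)} \lesssim L^{-\gamma}$. (One could also simply use that $V$ is compactly supported and $\chi_L \equiv 1$ on the support of $V$ for $L$ large, making these two terms vanish identically; but the decay argument is cleaner and uniform in $L$.)

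For the remaining two terms I would exploit that $\chi_L(x) = \chi_{1/2,1}(x/L)$, so $\nabla\chi_L(x) = L^{-1}(\nabla\chi_{1/2,1})(x/L)$ and $\Delta\chi_L(x) = L^{-2}(\Delta\chi_{1/2,1})(x/L)$, and crucially both $\nabla\chi_{1/2,1}$ and $\Delta\chi_{1/2,1}$ are supported in the annular region $(-1,1)^d \setminus (-1/2,1/2)^d$, so $\nabla\chi_L$ and $\Delta\chi_L$ are supported in $\Omega_L \setminus \Omega_{L/2}$, i.e. in $\{|x|_\infty \ge L/2\}$. On that region $1 \le (L/2)^{-(\gamma-1)}|x|^{\gamma-1}$ and $1 \le (L/2)^{-(\gamma-2)}|x|^{\gamma-2}$ respectively (interpreting $|x|^{\gamma-1}, |x|^{\gamma-2}$ trivially as $1$ when the exponent is nonpositive, since there the simple bound $1 \le 1$ already suffices after one uses only part of the available decay). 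Hence
\begin{align*}
\|\nabla\chi_L\cdot\nabla u\|_{L^2(\Omega_L)} &\lesssim L^{-1}\|\nabla u\|_{L^2(\Omega_L\setminus\Omega_{L/2})} \lesssim L^{-1}\cdot (L/2)^{-(\gamma-1)}\||x|^{\gamma-1}\nabla u\|_{L^2(\R^d)} \lesssim L^{-\gamma},\\
\|\Delta\chi_L\, u\|_{L^2(\Omega_L)} &\lesssim L^{-2}\|u\|_{L^2(\Omega_L\setminus\Omega_{L/2})} \lesssim L^{-2}\cdot(L/2)^{-(\gamma-2)}\||x|^{\gamma-2}u\|_{L^2(\R^d)} \lesssim L^{-\gamma}.
\end{align*}
The quantities $\||x|^{\gamma-1}\nabla u\|_{L^2(\R^d)}$ and $\||x|^{\gamma-2}u\|_{L^2(\R^d)}$ are controlled by $\|u\|_{H^\gamma(\R^d)\cap L^2(\R^d;|x|^{2\gamma}dx)}$ via interpolation between the weight and derivative scales, e.g. $\||x|^{\gamma-2}u\|_{L^2} \le \||x|^\gamma u\|_{L^2}^{(\gamma-2)/\gamma}\|u\|_{L^2}^{2/\gamma}$ by Hölder, and a commutator/interpolation estimate of the type $\||x|^{\gamma-1}\nabla u\|_{L^2} \lesssim \|u\|_{H^\gamma} + \||x|^\gamma u\|_{L^2}$ (this is a standard weighted-Sobolev interpolation, and is exactly the sort of bound underlying the well-posedness space in Lemma~\ref{lem:lwp}).

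Taking the supremum over $t\in[0,T]$ and summing the four contributions yields $\|R\|_{L^\infty(0,T;L^2(\Omega_L))} \le CL^{-\gamma}$ with $C$ depending only on $\|u\|_{L^\infty(0,T;H^\gamma(\R^d)\cap L^2(\R^d;|x|^{2\gamma}dx))}$ and on fixed quantities from $\chi_{1/2,1}$ and $\|V\|_{L^\infty}$. I expect the only mildly delicate point to be the mixed weighted--Sobolev interpolation needed to bound $\||x|^{\gamma-1}\nabla u\|_{L^2}$ (and more generally $\||x|^{\gamma-j}\nabla^k u\|_{L^2}$ for the intermediate terms) purely in terms of the two endpoint norms $\|u\|_{H^\gamma}$ and $\||x|^\gamma u\|_{L^2}$ — one must be careful that the interpolation is legitimate for integer $\gamma$ and does not secretly require extra regularity or decay. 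Everything else is a routine consequence of the support and scaling properties of the cut-off together with the elementary inequality $1 \le (L/2)^{-m}|x|^m$ on $\{|x|_\infty \ge L/2\}$.
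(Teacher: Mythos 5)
Your proposal is correct and follows essentially the same route as the paper: the same term-by-term decomposition of $R$, the same use of the support and scaling of $\chi_L$ together with the elementary bound $1\le (L/2)^{-m}|x|^m$ on $\Omega_{L/2}^c$, and the same reduction to controlling $\||x|^{\gamma-1}\nabla u\|_{L^2(\R^d)}$. The one point you flag as delicate is exactly what the paper settles by citing Lemma 4 of \cite{NP2009CPDE}, which gives $\||x|^{\gamma-1}|\nabla| u\|_{L^2}\le \||x|^{\gamma}u\|_{L^2}^{(\gamma-1)/\gamma}\||\nabla|^{\gamma}u\|_{L^2}^{1/\gamma}$, precisely the weighted interpolation you invoke.
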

\begin{proof}
From the definition of $R$ in \eqref{def:r}, we take the $L^2$ norm to obtain
\begin{align}
\|R\|_{L^\infty(0,T;L^2(\Omega_L))}& \le
 \|\chi_L Vu- \chi_L^2 V  u\|_{L^\infty(0,T;L^2(\Omega_L))} \notag \\
& \quad+ \|2 \nabla \chi_L  \nabla u\|_{L^\infty(0,T;L^2(\Omega_L))}
  + \|\Delta\chi_L u\|_{L^\infty(0,T;L^2(\Omega_L))} \label{equ:b}.
\end{align}
Since $\chi_L = 1$ in $ \Omega_{L/2}$ and $|\chi_L |\le 1$, it follows that for any function $f\in L^2(\R^d;|x|^{2\gamma} dx)$ the following inequality holds: 
\begin{align} \label{est:f}
 \| f - \chi_L  f\|_{L^2(\R^d)} = \| f - \chi_L f\|_{L^2( {\Omega_{L/2}^c})}
 \lesssim \|f\|_{L^2( {\Omega_{L/2}^c})}
 \lesssim  L^{- \gamma} \| |x|^ \gamma f\|_{L^2(\R^d)}.
\end{align}
By applying \eqref{est:f}, the Bernstein's inequality and Lemma~\ref{lem:lwp}, we obtain the following estimate for the first term on the right-hand side of \eqref{equ:b}: 
\begin{align*}
\|\chi_L Vu- \chi_L^2 V  u\|_{L^\infty(0,T;L^2(\Omega_L))}  
  &\lesssim \|\chi_L V\|_{L^\infty( \Omega_L)} \| u-\chi_L u\|_{L^\infty L^2 ([0,T]\times \R^d)} \\
  &\lesssim L^{- \gamma} \| |x|^ \gamma u\|_{L^\infty L^2([0,T]\times \R^d)} . 
\end{align*}
Since $\chi_L$ is a constant in $ \Omega_{L/2}$ and $ \Omega_{L}^c$, it follows that $\textrm{supp}(\nabla\chi_L)$ is contained in $\Omega_{L/2}^c$.
As a result, the second and third terms on the right-hand side of \eqref{equ:b} can be estimated as follows:  
\begin{align*}
&\|2 \nabla \chi_L  \nabla u\|_{L^\infty(0,T;L^2(\Omega_L))}
  + \|\Delta\chi_L u\|_{L^\infty(0,T;L^2(\Omega_L))} \\
& \lesssim \| \nabla \chi_L\|_{L^\infty (\R^d)} 
\big(\| |\nabla| u\|_{L^\infty L^2([0,T]\times \Omega_{L/2}^c)} \big) 
	+\| \Delta\chi_L\|_{L^\infty( \R^d)} 
	\big(	\| u\|_{L^\infty L^2([0,T]\times  \Omega_{L/2}^c)}\big)\\
&\lesssim L^{-1} \big( L^{1-\gamma}  \| |x|^{\gamma-1} |\nabla| u\|_{L^\infty L^2([0,T]\times \R^d)}\big) +L^{-2- \gamma} \| |x|^ \gamma u\|_{L^\infty L^2([0,T]\times \R^d)}\\
&\lesssim L^{-\gamma},
\end{align*}
where the boundedness of $ \| |x|^{\gamma-1} |\nabla| u\|_{L^2(\R^d)}$ follows from Lemma 4 in \cite{NP2009CPDE}, i.e.,
\begin{align*}
\| |x|^{ \gamma-1} |\nabla| u\|_{L^2(\R^d)}
\le \| |x|^{\gamma} u \|_{L^2(\R^d)}^{ (\gamma-1)/\gamma} 
\| |\nabla|^\gamma u\|_{L^2(\R^d)}^{1/\gamma} \le  C\| u \|_{L^\infty(0,T;H^\gamma(\R^d)\cap L^2(\R^d;|x|^{2\gamma}dx))} .
\end{align*}
Substituting these estimates into \eqref{equ:b}, we obtain the result of Lemma \ref{lem:r0}. 
\end{proof}

\begin{lemma}\label{lem:ri}
Let $u\in L^\infty(0,T; H^\gamma(\R^d)\cap L^2(\R^d;|x|^{2\gamma}dx))$ for some $ \gamma\in \Z^+ $. 
Then there exists a constant $C$, depending only on 
$\|u\|_{L^\infty(0,T; H^\gamma(\R^d)\cap L^2(\R^d;|x|^{2\gamma}dx))}$,  such that the remainders $R_i^n$, $i=1,2,3$, defined in Section \ref{section:remainders} satisfy the following estimates: 

\begin{itemize}
\item[(i)] If $ \gamma=1$ then 
\begin{align*}
\|R_1^n + R_2^n + R_3^n\|_{L^2( \Omega_L)}
    \le C\tau \big( L^{-1} + (NL^{-1})^{- 1} +\tau N L^{-1} +\tau\big).
\end{align*}
\item[(ii)] If $ \gamma\ge2$ then 
\begin{align*}
\|R_1^n + R_2^n + R_3^n\|_{L^2(\Omega_L)}
    \le C\tau \big( L^{-\gamma} + (NL^{-1})^{- \gamma} +\tau \big).
\end{align*}
\end{itemize}
\end{lemma}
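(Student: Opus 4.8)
The plan is to bound each remainder $R_i^n$ separately using the $L^2\to L^2$ boundedness of the free propagator $\tstd{i\sigma}$, the already-established estimates on $R$ (Lemma~\ref{lem:r0}) and on $u_L$ in $H^\gamma(\Omega_L)$ (the preceding lemma), together with Bernstein's inequalities on the scaled torus (Lemma~\ref{lem:Bernstein}, Remark~\ref{rem:bern}), the Kato--Ponce inequality, and a short-time estimate for the increment $u_L(t_n+s)-u_L(t_n)$ obtained from the Duhamel formula \eqref{equ:solu}.

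First I would handle $R_1^n = i\int_{t_n}^{t_{n+1}}\tstd{i(t_{n+1}-s)}R(s)\,ds$: since $\tstd{i\sigma}$ is unitary on $L^2(\Omega_L)$, the triangle/Minkowski inequality gives $\|R_1^n\|_{L^2(\Omega_L)}\le \int_{t_n}^{t_{n+1}}\|R(s)\|_{L^2(\Omega_L)}\,ds\le \tau\|R\|_{L^\infty(0,T;L^2(\Omega_L))}\lesssim \tau L^{-\gamma}$ by Lemma~\ref{lem:r0}. Next, for $R_2^n$ I would write it as $i\int_0^\tau \tstd{i(\tau-s)}\big[(\chi_L V-\tPN I_{L,N}(\chi_L V)\tPN)u_L(t_n+s)\big]\,ds$ and split the operator difference as $(1-\tPN)(\chi_L V\,u_L) + \tPN\big[(\chi_L V - I_{L,N}(\chi_L V))\tPN u_L\big] + \tPN\big[I_{L,N}(\chi_L V)(1-\tPN)u_L\big]$. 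Using unitarity of the propagator, each piece is controlled by $\tau$ times an $L^2$ bound: for the first piece, Remark~\ref{rem:bern} and Kato--Ponce give $(NL^{-1})^{-\gamma}\|\,|\nabla|_L^\gamma(\chi_L V\,u_L)\|_{L^2(\Omega_L)}\lesssim (NL^{-1})^{-\gamma}$ since $\chi_L V$ and $u_L$ are bounded in $H^\gamma(\Omega_L)$ uniformly in $L$ (here the scaling of $\chi_L V$ works in our favour, as $\|\,|\nabla|^\gamma(\chi_L V)\|_{L^\infty}\lesssim 1$ because $V$ is fixed and compactly supported); the interpolation-error piece is handled by the trigonometric interpolation estimate in Remark~\ref{rem:bern} ($s>d/2$) applied to $\chi_L V\in H^\gamma$ with $\gamma>d/2$ after possibly a $\gamma-$ adjustment, or directly for $\gamma\ge 2\ge d/2$; and the last piece is again $(NL^{-1})^{-\gamma}$ by Bernstein on $(1-\tPN)u_L$. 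Altogether $\|R_2^n\|_{L^2(\Omega_L)}\lesssim \tau\big(L^{-\gamma}+(NL^{-1})^{-\gamma}\big)$.

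For $R_3^n = i\int_0^\tau \tstd{i(\tau-s)}\tPN\big[I_{L,N}(\chi_L V)\tPN(u_L(t_n+s)-u_L(t_n))\big]\,ds$ I would first estimate the increment: from \eqref{equ:solu}, $u_L(t_n+s)-u_L(t_n) = (\tstd{is}-1)u_L(t_n) - i\int_{t_n}^{t_n+s}\tstd{i(t_n+s-\sigma)}[\chi_L V u_L(\sigma)+R(\sigma)]\,d\sigma$. The integral term is $O(s)$ in $L^2$ by the bounds on $\chi_L V u_L$ and $R$. For the term $(\tstd{is}-1)u_L(t_n)$ the behaviour differs by regularity: when $\gamma\ge 2$ one has $\|(\tstd{is}-1)u_L(t_n)\|_{L^2(\Omega_L)}\lesssim s\|\Delta u_L(t_n)\|_{L^2(\Omega_L)}\lesssim s$, so integrating over $s\in[0,\tau]$ gives $\|R_3^n\|_{L^2(\Omega_L)}\lesssim \tau^2\lesssim \tau\cdot\tau$; when $\gamma=1$ we only have $u_L(t_n)\in H^1$, and the cheap bound $\|(\tstd{is}-1)u_L(t_n)\|_{L^2(\Omega_L)}\lesssim s^{1/2}$ is not strong enough to match the stated $\tau N L^{-1}$ term, so instead I would keep $\tPN$ in front and use Bernstein: $\|\tPN(\tstd{is}-1)\tPN u_L(t_n)\|_{L^2}\lesssim s\|\Delta \tPN u_L(t_n)\|_{L^2}\lesssim s (NL^{-1})^2\|u_L(t_n)\|_{L^2}\lesssim s(NL^{-1})^2$, which after integration and using the uniform bound $\|I_{L,N}(\chi_L V)\|_{L^\infty}\lesssim 1$ gives a contribution $\lesssim \tau^2(NL^{-1})^2$; under the CFL condition $\tau N^2 L^{-2}\le 1$ this is $\lesssim \tau\cdot \tau N L^{-1}\cdot(\tau N L^{-1})\le \tau\cdot\tau NL^{-1}$ — more carefully, writing $\tau^2(NL^{-1})^2 = \tau\cdot(\tau N^2L^{-2})\cdot 1 \le \tau$, but to recover exactly the term $\tau N L^{-1}$ I would split $(\tstd{is}-1)\tPN u_L$ using $\|\Delta\tPN u_L\|\lesssim (NL^{-1})\|\,|\nabla|_L u_L\|\lesssim NL^{-1}$ (since $u_L\in H^1$), giving $\|R_3^n\|\lesssim \tau^2 NL^{-1}$, which is exactly $\tau\cdot\tau NL^{-1}$; the $O(s)$ integral term contributes $\tau^2\lesssim\tau\cdot\tau$ as before. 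Summing the three bounds yields (i) for $\gamma=1$ and (ii) for $\gamma\ge 2$.

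The main obstacle I anticipate is the $\gamma=1$ estimate for $R_3^n$: with only $H^1$ regularity of $u_L(t_n)$ the naive half-order-in-$\tau$ bound for the propagator increment is too weak, and the resolution is to exploit that $R_3^n$ always carries the low-frequency projector $\tPN$, converting $\|\Delta\tPN(\cdot)\|_{L^2}$ into a gain of only one extra power of $NL^{-1}$ rather than two (i.e. using $\|\,|\nabla|_L\tPN u_L\|_{L^2}\lesssim 1$ from the $H^1$ bound, not $\|\Delta\tPN u_L\|_{L^2}$), so that the worst term is $\tau^2 NL^{-1}$, matching the stated CFL-dependent term. A secondary technical point is making sure all constants are genuinely independent of $L$: this relies on the fact that $V$ is a fixed compactly supported smooth function, so $\chi_L V = V$ for $L$ large and hence $\|\chi_L V\|_{H^\gamma}\lesssim \|V\|_{H^\gamma}$ uniformly, and on the uniform $H^\gamma(\Omega_L)$ bound for $u_L$ proved in the preceding lemma; the trigonometric-interpolation error for $\chi_L V$ requires $\chi_L V\in H^s(\Omega_L)$ with $s>d/2$, which holds since $V$ is smooth, so in the regime $\gamma<d/2$ one simply uses extra smoothness of $V$ rather than of $u$.
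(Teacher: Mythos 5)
Your proposal is correct and follows essentially the same route as the paper's proof: unitarity of the propagator plus Lemma \ref{lem:r0} for $R_1^n$, a three-term telescoping of $\chi_L V u_L-\PN I_{L,N}(\chi_L V)\PN u_L$ estimated via Bernstein/interpolation and the smoothness of $V$ for $R_2^n$, and a Duhamel bound for the increment in $R_3^n$, using $H^2$ regularity when $\gamma\ge2$ and exploiting the projection $\tPN$ via Bernstein to trade one derivative for a factor $NL^{-1}$ when $\gamma=1$, exactly as the paper does. The only nit is algebraic: in your splitting of $R_2^n$ the last piece should read $\tPN\big[\chi_L V\,(1-\tPN)u_L\big]$ (with $\chi_L V$, not $I_{L,N}(\chi_L V)$) for the identity to close exactly; the resulting discrepancy term is of the same (or smaller) order, so the argument is unaffected.
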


\begin{proof}
The definition of $R_1^n$ in Section \ref{section:remainders} and the result in Lemma \ref{lem:r0} imply that 
\begin{align*}
 \|R_1^n\|_{L^2(\Omega_L)}    \le \tau  \|R\|_{L^\infty(0,T;L^2(\Omega_L))}
   \le C\tau L^{-\gamma}.
\end{align*}
By the definition of $R_2^n$ in Section \ref{section:remainders}, we have
\begin{align*}
\|R_2^n\|_{L^2(\Omega_L)} 
  &  \le \tau \|\chi_L V u_L(t_n+s) - \tPN (I_{L,N}(\chi_L V) \tPN u_L(t_n+s))\|_{L^\infty(0,\tau;L^2(\Omega_L))}\\
  &\le \tau \| (1- \tPN) (I_{L,N}(\chi_L V) \tPN u_L(t_n+s))\|_{L^\infty(0,\tau;L^2(\Omega_L))}\\
  &\quad+ \tau \|  (\chi_L V - I_{L,N} (\chi_L V)) u_L(t_n+s)\|_{L^\infty(0,\tau;L^2(\Omega_L))}\\
 &\quad   + \tau \|I_{L,N}(\chi_L V)  (1-\tPN)u_L(t_n+s)\|_{L^\infty(0,\tau;L^2(\Omega_L))}.
\end{align*}
By Lemma \ref{lem:Bernstein} and considering that $V$ is smooth and has compact support, we obtain
\begin{align*}
\|R_2^n\|_{L^2(\Omega_L)} 
     \le C\tau (NL^{-1})^{- \gamma} \|u_L\|_{L^\infty H^\gamma ([0,T]\times \Omega_L)}.
\end{align*}
By the definition of $R_3^n$ in Section \ref{section:remainders} and the boundedness of $\|I_{L,N}(\chi_L V)\|_{L^\infty(\Omega_L)}$, we obtain
\begin{align*}
\|R_3^n\|_{L^2(\Omega_L)} 
   & \le C\tau \|\tPN \big(u_L(t_n+s)-u_L(t_n)\big)\|_{L^\infty(0,T;L^2(\Omega_L))}
\end{align*}
Then, using the expression of $u_L(t_n+s)-u_L(t_n)$, which is similar as \eqref{equ:solu}, we have
\begin{align*}
\|R_3^n\|_{L^2(\Omega_L)} 
   &\le C \tau \| \tPN (\tstd{is}-1) u_L(t_n) \|_{L^\infty(0,T;L^2(\Omega_L))}  \\
    &\quad   + C\tau \Big\|\int_{t_n}^{t_{n}+s} \tstd{i(t_n+s-s')} \big[\chi_L V u_L(s') + R(s')\big] ds' \Big\|_{L^\infty(0,T;L^2(\Omega_L))} .
\end{align*}
In the case $ \gamma=1$, combining  Lemma  \ref{lem:r0} with the inequality above, and utilizing Bernstein's inequality on $ \Omega_L$ (i.e., Lemma \ref{lem:Bernstein}), we obtain
\begin{eqnarray*}
  &&\|R_3^n\|_{L^2(\Omega_L)} \\
  & \le& C \tau^2 \big(\| u_L\|_{L^\infty(0,T;H^2(\Omega_L))} +  \| u_L\|_{L^\infty(0,T;L^2(\Omega_L))} + \| R \|_{L^\infty(0,T;L^2(\Omega_L))}\big)\\
  & \le& C \tau^2 \big( (1+N L^{-1})\| u_L\|_{L^\infty(0,T;H^1(\Omega_L))}  + \| R \|_{L^\infty(0,T;L^2(\Omega_L))}\big)\\
  &\le&  C \tau^2 \big( 1+ N L^{-1} + L^{-1}  \big)
  \le C \tau^2 (1+N L^{-1}).
\end{eqnarray*}
In the case $ \gamma\ge2$, the same argument leads to the following result:
\begin{eqnarray*}
  &&\|R_3^n\|_{L^2(\Omega_L)} \\
  & \le& C \tau^2 \big( \| u_L\|_{L^\infty(0,T;H^2(\Omega_L))} +  \| u_L\|_{L^\infty(0,T;L^2(\Omega_L))} + \| R\|_{L^\infty(0,T;L^2(\Omega_L))}\big) 
  \le C \tau^2 .
\end{eqnarray*}
This completes the proof of Lemma \ref{lem:ri}. 
\end{proof}

\subsection{Error estimates}
We consider the difference between $u_L(t_{n+1})$ and the numerical solution $u_L^{n+1}$, i.e., 
\begin{align*}
&\| u_L(t_{n+1}) - u_L^{n+1}\|_{L^2( \Omega_L)} \\
  & \le \| u_L(t_{n}) - u_L^n\|_{L^2( \Omega_L)}  
+ \tau \|  \varphi_1(i \tau \Delta) \tPN \big( I_{L,N}(\chi_L V)\cdot  (u_L(t_n)-u_L^n)\big)\|_{L^2( \Omega_L)}\\
  &\quad  +\|R_1^n+R_2^n+R_3^n\|_{L^2( \Omega_L)}\\
 &\le (1+ C\tau) \| u_L(t_{n}) - u_L^n\|_{L^2( \Omega_L)}  
 +\|R_1^n+R_2^n+R_3^n\|_{L^2( \Omega_L)}.
\end{align*}
By the first result of Lemma \ref{lem:Bernstein}, we have 
\begin{align*}
\|u_L(0)-u_L^0\|_{L^2( \Omega_L)} 
&= \| \chi_Lu_0- \tPN (\chi_L u_0)\|_{L^2( \Omega_L)} 
\lesssim (N L^{-1})^{- \gamma} \| \chi_L u_0 \|_{H^\gamma(\Omega_L)} 
\end{align*}
where $\| \chi_L u_0 \|_{H^\gamma(\Omega_L)}  \lesssim \|  u_0 \|_{H^\gamma(\Omega_L)} $. 
In the case $ \gamma=1$, we obtain by Gronwall's inequality and Lemma \ref{lem:ri} that
\begin{align}\label{est:gam}
\| u_L(t_{n+1}) - u_L^{n+1}\|_{L^2( \Omega_L)} 
 &\le 
 \left\{
 \begin{aligned}
 &C\big( L^{-1}  + (NL^{-1})^{- 1} +\tau N L^{-1}+\tau \big) &&\mbox{for}\,\,\,\gamma=1 , \\
 &C\big( L^{-\gamma}  + (NL^{-1})^{- \gamma} +\tau \big) &&\mbox{for}\,\,\,\gamma\ge 2 .
 \end{aligned}\right. 
\end{align}
Then 
\begin{align*}
\|u(t_n)- Eu_L^n\|_{L^2(\R^d)}
&\le\|u(t_n)- \chi_L u(t_n)\|_{L^2(\R^d)}
+\|u_L(t_n)- u_L^n\|_{L^2( \Omega_L)}\\
&\lesssim 
\|(\chi_L - 1)u(t_n)\|_{L^2(\R^d)}
+\|u_L(t_n)- u_L^n\|_{L^2( \Omega_L)}\\
&\lesssim L^{- \gamma} \| |x|^ \gamma u(t_n)\|_{L^2(\R^d)} 
+\|u_L(t_n)- u_L^n\|_{L^2( \Omega_L)} \\
&\lesssim 
 \left\{
 \begin{aligned}
 &L^{-1}  + (NL^{-1})^{- 1} +\tau N L^{-1} +\tau&&\mbox{for}\,\,\,\gamma=1 , \\
 &L^{-\gamma}  + (NL^{-1})^{- \gamma} +\tau &&\mbox{for}\,\,\,\gamma\ge 2 ,
 \end{aligned}\right. 
\end{align*}
where the last inequality follows from \eqref{est:gam}. This completes the proof of Theorem \ref{theorem-lsp}.\hfill\endproof

\section{Numerical results}\label{section:numerical}
In this section, we present numerical examples to support the theoretical analysis on the convergence rates of the proposed method (as shown in Theorem \ref{theorem-lsp}), and to demonstrate the the performance of the method on quantum tunneling (Section \ref{section:quantum_tunneling}) and quantum scattering from lattices (Section \ref{section:quantum_scattering}). 

\subsection{The free Schr\"odinger equation}
We consider the linear Schr\"odinger equation \eqref{model-lsp} in one dimension with $V(x) = 0$ and an initial condition given by a Gaussian wave packet:
\begin{align}\label{equ:free}
i \partial_t u + \partial_{xx} u = 0 
\quad\mbox{with}\quad 
u(x,0) = e^{-x^2/9+i x}. 
\end{align}
In this scenario, the exact solution is known explicitly, i.e.,
\begin{align*}
u(x,t) = \frac{1}{ \sqrt{1+ \frac{4}{9} it}}
\exp \left( -\frac{it}{90}  \frac{(2t-x)^2}{90+40it} - \frac{x^2}{90}\right),
\end{align*}
where the modulus $|u|$ propagates with a positive velocity $v = 2$. 
\begin{figure}[htp!]
\centering
\includegraphics[width=0.45\textwidth]{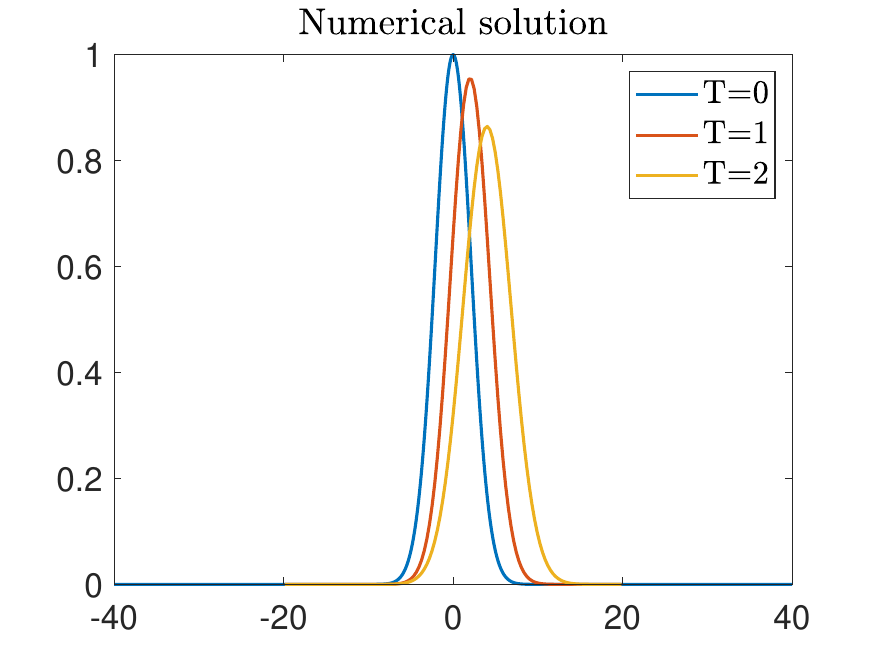}
\includegraphics[width=0.45\textwidth]{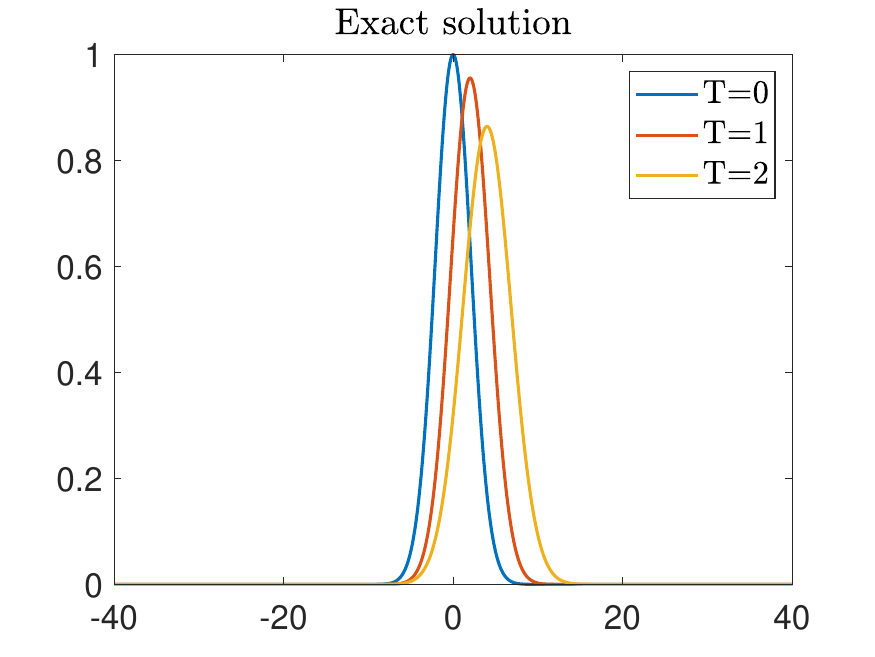}
\vspace{-10pt}
\caption{Modulus of the exact and numerical solutions (free Schr\"odinger equation).}
\label{NumerSol}
\end{figure}

This allows us to conveniently measure the error between the numerical solutions and the exact one. The modulus of the numerical and exact solutions at $T = 0, 1, 2$ are displayed in Figure \ref{NumerSol}, where the numerical solution is given by the proposed method with $L=40$ and $N=1600$. Clearly, the numerical solution produces the correct shape of the wave.

The absence of a potential means there is no temporal discretization error. Therefore, we only demonstrate the convergence of the spatial discretizations. We set $L = N^{1/2}$ and apply the proposed method with $N = 2^j$ for $j = 4, \ldots, 10$ (where $N$ denotes the degrees of freedom in the spatial discretizations). The numerical results in Figure \ref{CVSol1} show that the $L^2$ errors from the spatial discretizations converge faster than any algebraic power, which is consistent with the result proved in Theorem \ref{theorem-lsp}, given that the initial solution is smooth and no temporal error is involved.

\begin{figure}[htp!]
\centering
\includegraphics[width=0.45\textwidth]{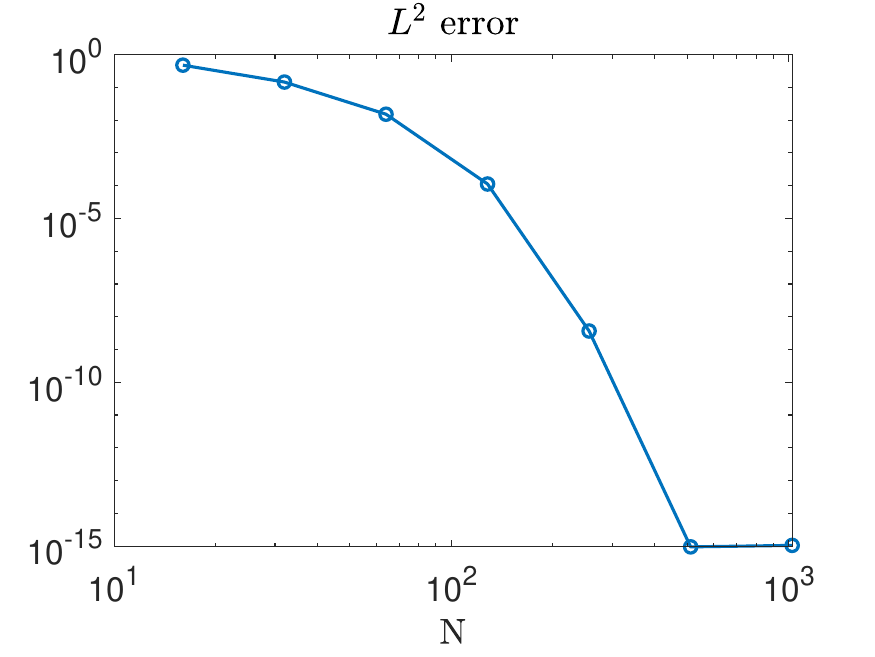}
\vspace{-10pt}
\caption{Errors of the numerical solutions at $T = 1$ (free Schr\"odinger equation).}
\label{CVSol1}
\end{figure}

Since the original problem \eqref{equ:free} is posed on the whole space, the exact solution will eventually propagate to the boundary of the predefined $L$. In practice, to prevent nonphysical artifacts caused by spatial truncation and periodic boundary conditions and to ensure that the solution remains away from the extended boundary, we can dynamically adjust the bounded window of computation to simulate wave propagation by monitoring the following values (this is beyond the scope of analysis in this paper):
$$
F(u_L^n) = \max\{|u_L^n(-L)|, |u_L^n(L)|\}. 
$$
If these values exceed a specified threshold $\epsilon$, it indicates that the solution has reached the boundary. In this case, we extend the window from $[-L, L]$ to $[-2L, 2L]$. Furthermore, the numerical solution $u_L^n$ at this point should be extended to $[-2L, 2L]$ using a zero extension operator $E$ to become the updated initial function. We refer to this process as domain extension and outline the procedure in Algorithm \ref{euclid}.

\begin{algorithm}
\caption{Extend computational domain for numerical solution $u_L^n$ with tolerance $ \epsilon$}
\label{euclid}
\begin{algorithmic}[1]
\STATE $e \gets F(u_L^n)$ 
\WHILE{$e \ge \epsilon$}
\STATE $L \gets 2L$
\STATE $N \gets 2N$
\STATE $u_L^n \gets P_{L,N} I_{L, 2N} Eu_L^n$
\STATE Continue computation of \eqref{scheme-p} with the updated $L$, $N$ and $u_L^n$.
\ENDWHILE
\end{algorithmic}
\end{algorithm}

\begin{figure}[htp!]
\centering
\includegraphics[width=0.45\textwidth]{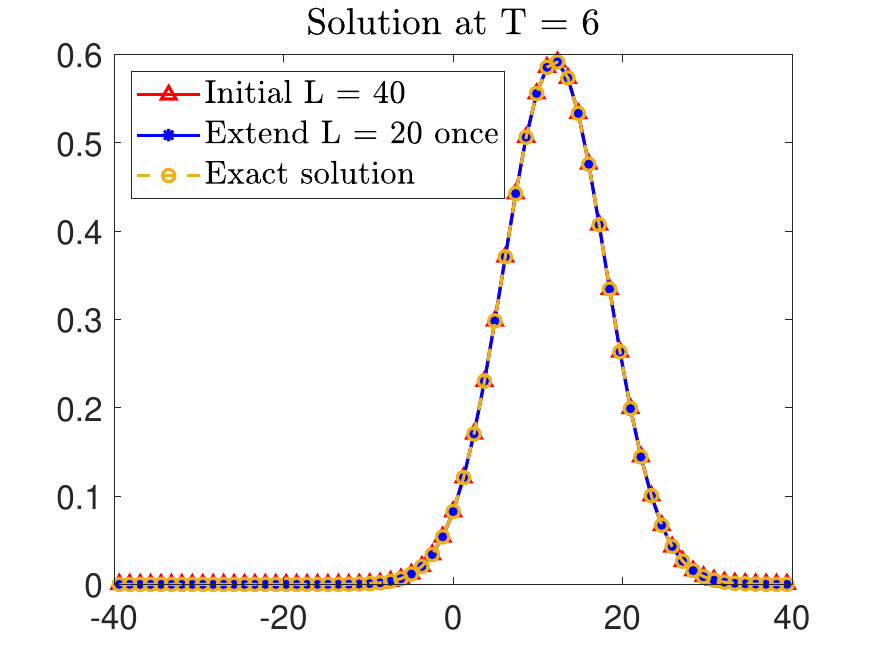}
\vspace{-10pt}
\caption{Comparison between exact and numerical solutions 
 (free Schr\"odinger equation). 
}
\label{ExtendSol}
\end{figure}

The effectiveness of the domain extension technique is demonstrated in Figure \ref{ExtendSol}. At $T = 6$, the numerical solution computed using a sufficiently large $L = 40$ does not require such domain extension, as the solution remains close to zero in the vicinity of the boundaries. However, when starting with $L = 20$, the numerical solution approaches the boundary's vicinity in the computation. Consequently, a domain extension is performed at $T_e = 3.2344$ according to Algorithm \ref{euclid}. In Figure \ref{ExtendSol}, the solutions from both scenarios align closely, highlighting the effectiveness of the domain extension procedure.

\subsection{Quantum tunneling in one dimension}\label{section:quantum_tunneling}

We consider an example of quantum tunneling modelled by equation \eqref{model-lsp} in one dimension with an external potential $V(x) = 200\,b(10x)$ having compact support, where 
\[
b(x) =
\begin{cases}
\exp\left(-\frac{1}{1-x^2}\right), & \text{if } |x|\leq 1,\\
0 ,& \text{if } |x| > 1.
\end{cases}
\] 
The following three types of initial conditions are considered.
\begin{itemize}
\item[(i)] Type I. Smooth initial function:
\begin{align*}
u_0(x) = \exp(-(x+5)^2 + 8i(x+5)).
\end{align*}
\item[(ii)] Type II. $H^1$ initial function:
\begin{align*}
u_0(x) = {|x+8|}^{0.51}\exp(-(x+8)^2)\exp(4i(x+8)).
\end{align*}
\item[(iii)] Type III. $H^2$ initial function:
\begin{align*}
u_0(x) = (x+8){|x+8|}^{0.51}\exp(-(x+8)^2)\exp(4i(x+8)).
\end{align*}
\end{itemize}

\begin{figure}[htp!]
\centering
\includegraphics[width=0.45\textwidth]{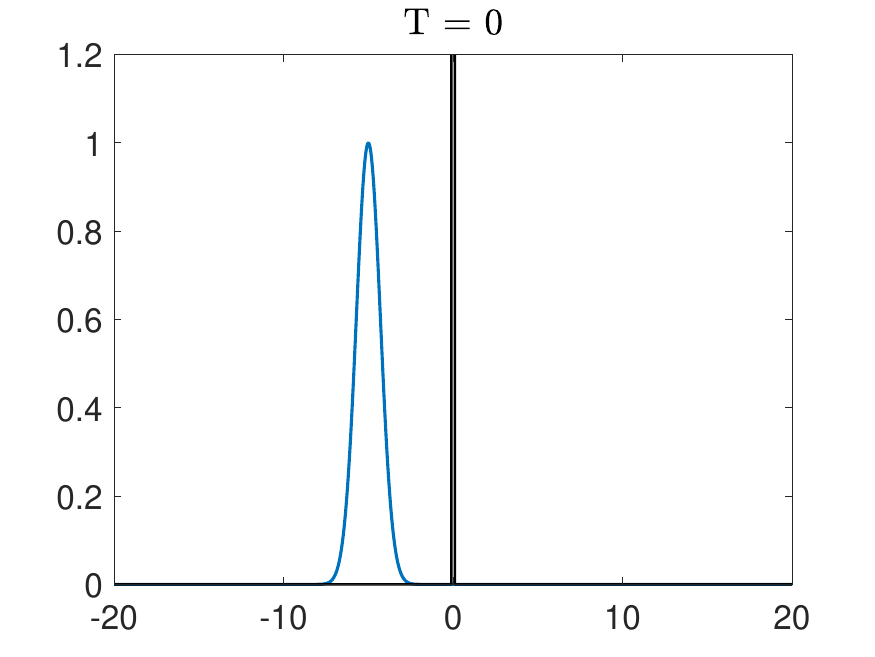}
\includegraphics[width=0.45\textwidth]{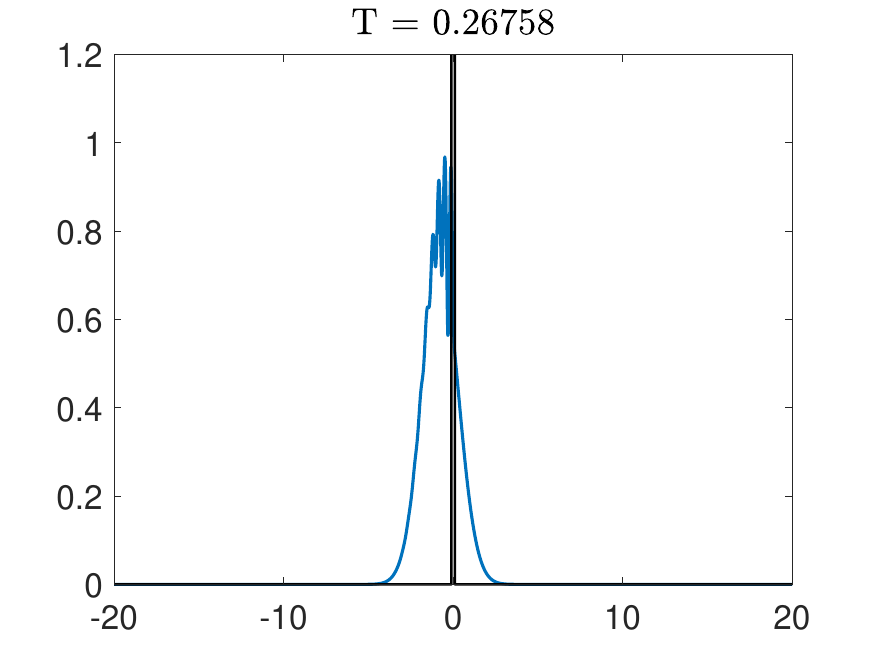}\\
\includegraphics[width=0.45\textwidth]{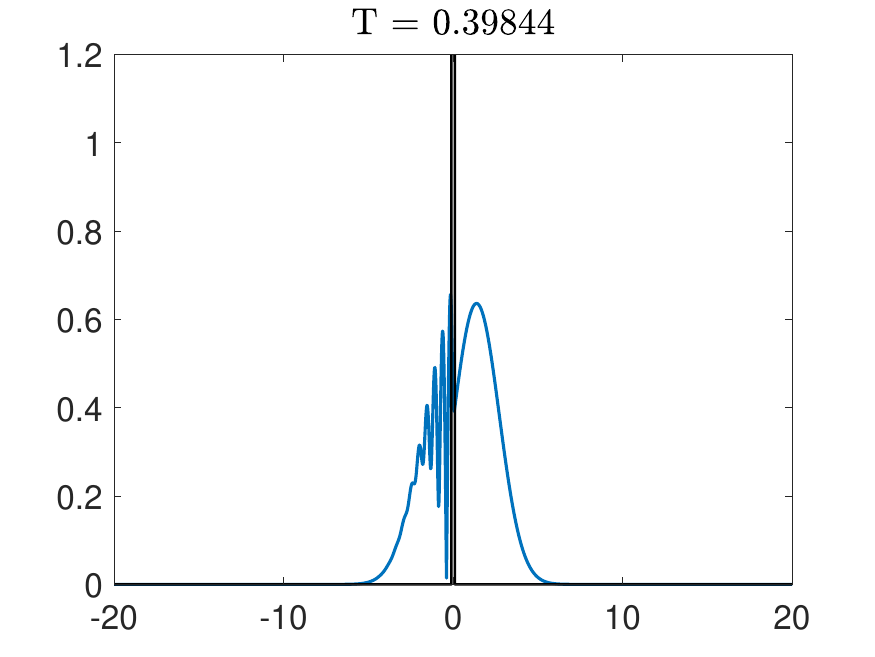}
\includegraphics[width=0.45\textwidth]{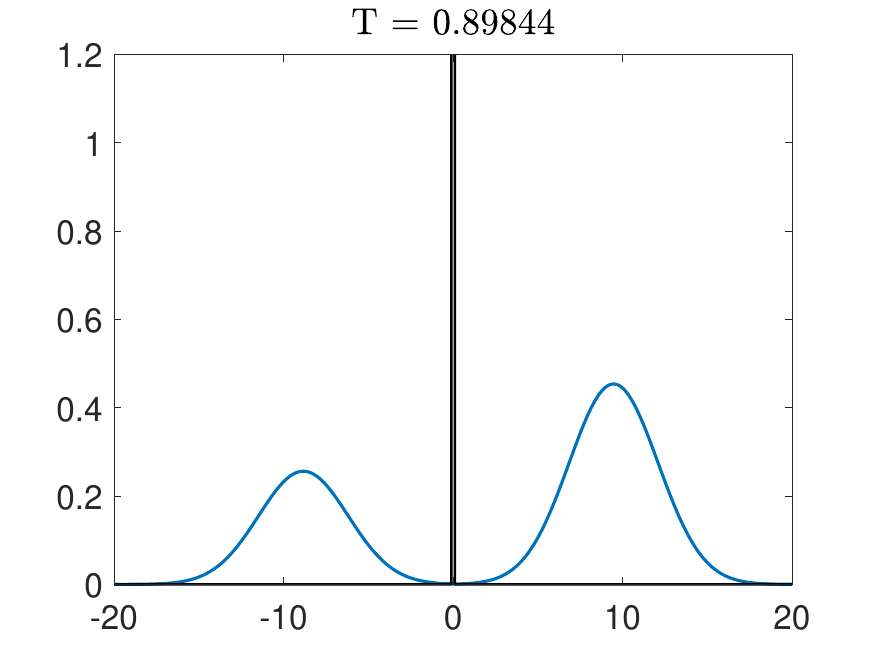}
\caption{
Numerical solution of $|u|$ with Type I initial function (quantum tunneling example).}
\label{DMpic1d}
\end{figure}

\begin{figure}[htp!]
\centering
\subfigure[Convergence in space]{\includegraphics[width=0.45\textwidth]{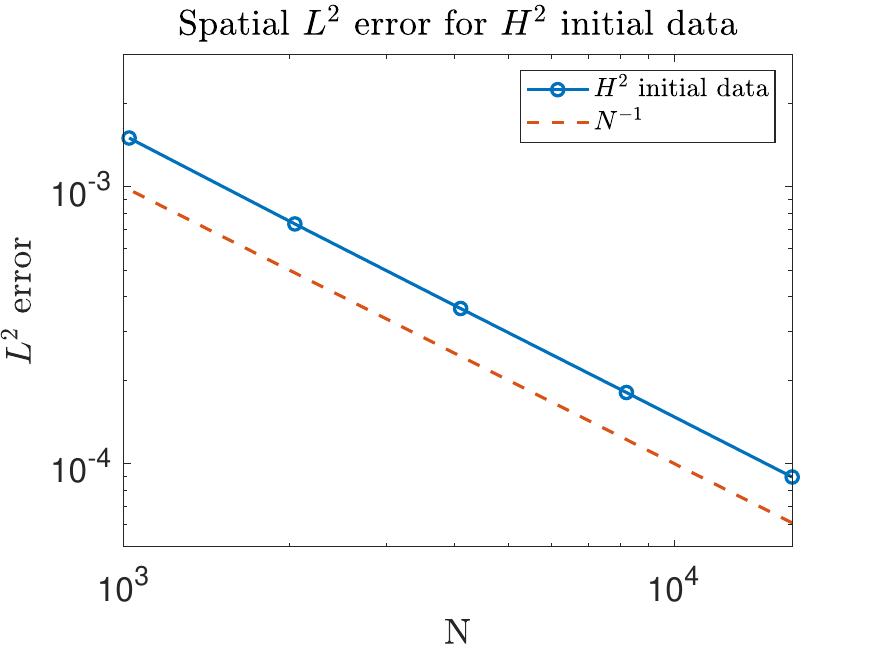}}
\subfigure[Convergence in time]{\includegraphics[width=0.45\textwidth]{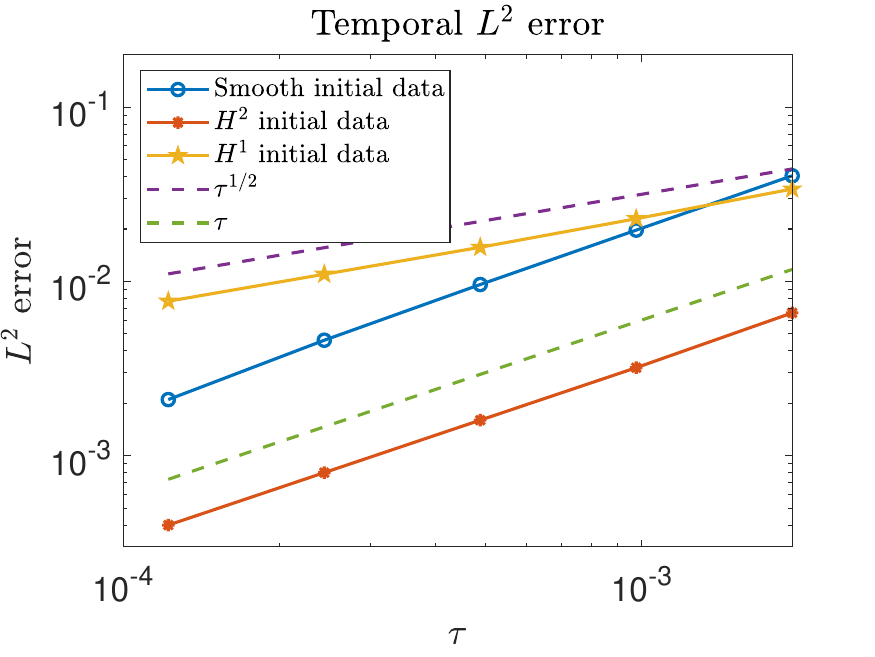}}
\caption{Convergence rate of numerical solution at $T = 1$ (quantum tunneling example).
}
\label{CVRate2}
\end{figure}

We begin by demonstrating the numerical solutions for the Type I smooth initial function. As shown in Figure \ref{DMpic1d}, numerical solution initially propagates to the right direction before interacting with the potential. Subsequently, part of the wave is reflected back while another part tunnels through the potential. Numerical solutions for the initial functions of Type II and Type III exhibit similar behavior. 

Next, we support the error estimates proved in Theorem \ref{theorem-lsp} through numerical experiments. Since the exact solution is unavailable, a reference solution at $T = 1$ is computed using $N = 2^{16}$, $\tau = 2^{-16}$, and $L = 2^8$. For the $H^2$ initial function of Type III, we select $N = 2^{10}, \ldots, 2^{14}$ and $L = N^{1/2}$ with a sufficiently small stepsize $\tau = 2^{-16}$ such that the errors from time discretization can be ignored. The numerical results in Figure \ref{CVRate2} (a) indicate that first-order convergence in space discretization is achieved for the $H^2$ initial function. 

According to Theorem~\ref{theorem-lsp}, half-order convergence should be achieved for the $H^1$ initial function (Type II) by choosing $L = N^{1/2}$ and $N = \tau^{-1}$, and first-order convergence with respect to $\tau$ should be achieved for the $H^2$ and smooth initial functions (Type I and Type III) by choosing sufficiently large $N= 2^{16}$ and $L=N^{1/2}$. 
These results are also supported by the numerical results shown in Figure \ref{CVRate2} (b). 

For long time simulation, we apply the domain extension technique outlined in Algorithm \ref{euclid} (this is beyond the scope of analysis in this paper). For the Type I smooth initial function, $L=20$ is chosen at $t=0$ and domain extension is performed at $t = 0.9452$ and $t = 1.7411$ according to Algorithm \ref{euclid}. The numerical result is very close to that computed directly by choosing $L = 80$ at $t=0$, as shown in Figure \ref{potentialcompare} (a). For the Type III $H^2$ initial function, $L=40$ is chosen at $t=0$ and domain extension is performed at $t = 1.3008$. Again, the numerical result is very close to that computed directly by choosing $L = 80$ at $t=0$, as shown in Figure \ref{potentialcompare} (b).

\begin{figure}[htp!]
\centering
\subfigure[Type I initial function]{\includegraphics[width=0.45\textwidth]{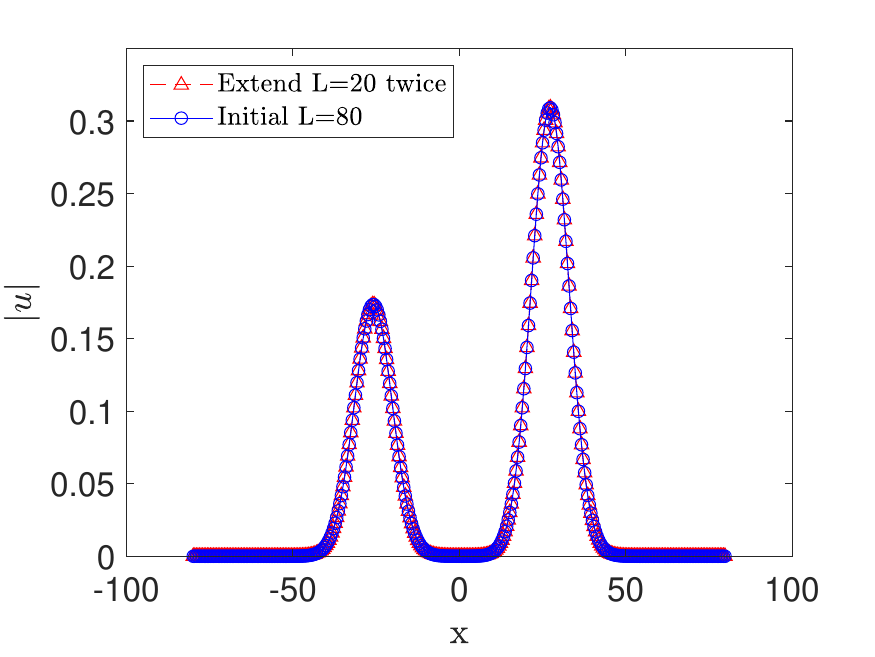}}
\subfigure[Type III initial function]{\includegraphics[width=0.45\textwidth]{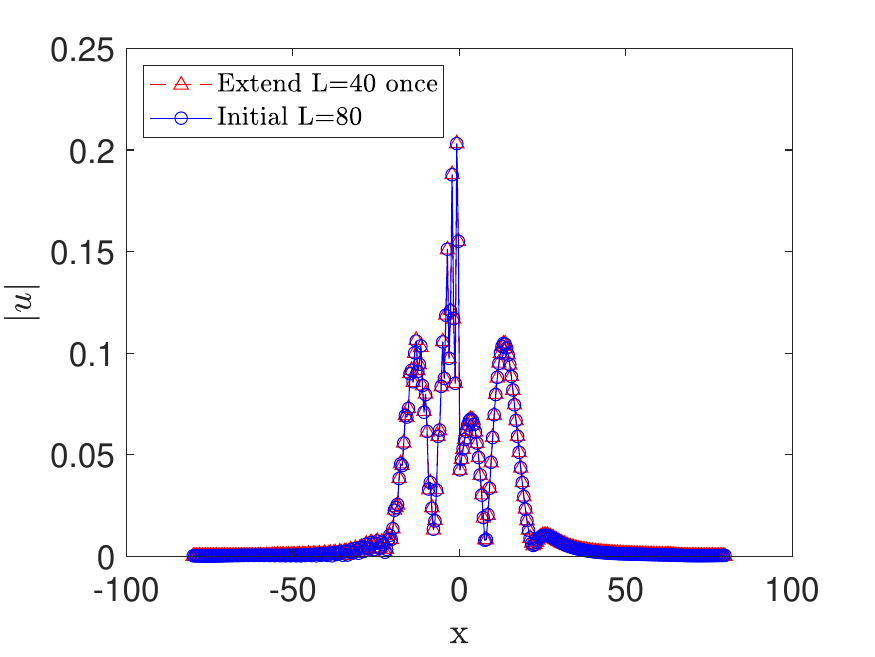}}
\caption{
Comparison between numerical solutions (quantum tunneling example).
}
\label{potentialcompare}
\end{figure}

%
%

\subsection{Quantum scattering in two dimensions}\label{section:quantum_scattering}

We consider an example of quantum scattering from crystal lattices modelled by equation \eqref{model-lsp} in two dimensions with the following external potential: 
\begin{equation*}
V(x,y) = 10\sum_{i = -1}^1\sum_{j = -5}^5 b(4((x-i)^2+(y-6j/5)^2)),
\end{equation*}
where 
\[
b(x,y) =
\begin{cases}
\exp\left(-\frac{1}{1-x^2-y^2}\right), & \text{if } x^2+y^2\leq 1,\\
0 ,& \text{if } x^2+y^2 > 1.
\end{cases}
\]
The following two types of initial conditions are considered.
\begin{itemize}
\item[(i)] Type I. Smooth initial function:
\begin{align*}
u_0(x,y) = e^{-r^2} e^{4i(x+2)},\quad r = ((x+2)^2+y^2)^{1/2}.
\end{align*}
\item[(ii)] Type II. $H^2$ initial function:
\begin{align*}
u_0(x,y) = r^{1.02} e^{-r^2} e^{4i(x+2)}.
\end{align*}
\end{itemize}
The potential and Type I initial function are shown in Figure \ref{potential}. 

\begin{figure}[htp!]
\centering
\includegraphics[width=0.45\textwidth]{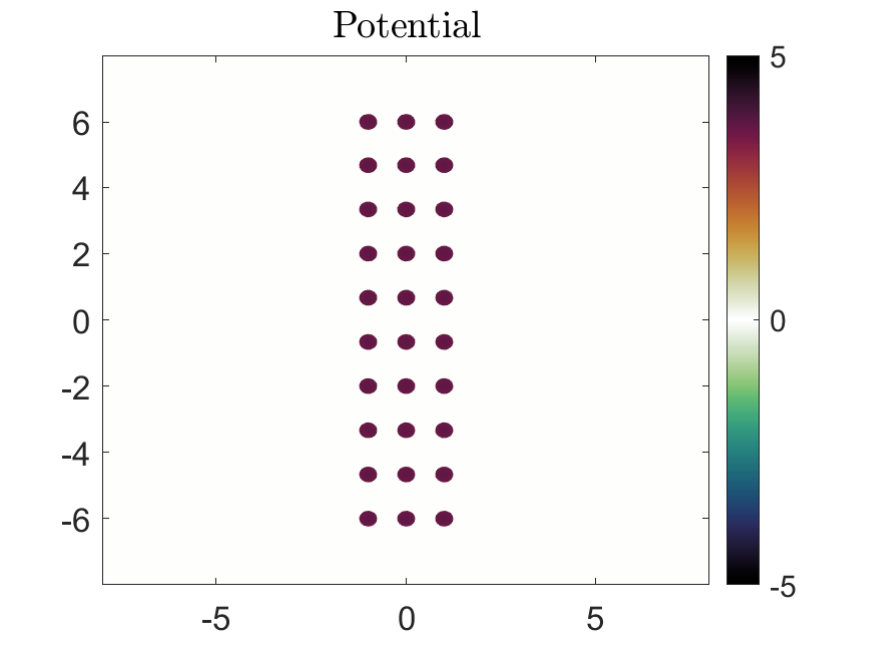}
\includegraphics[width=0.45\textwidth]{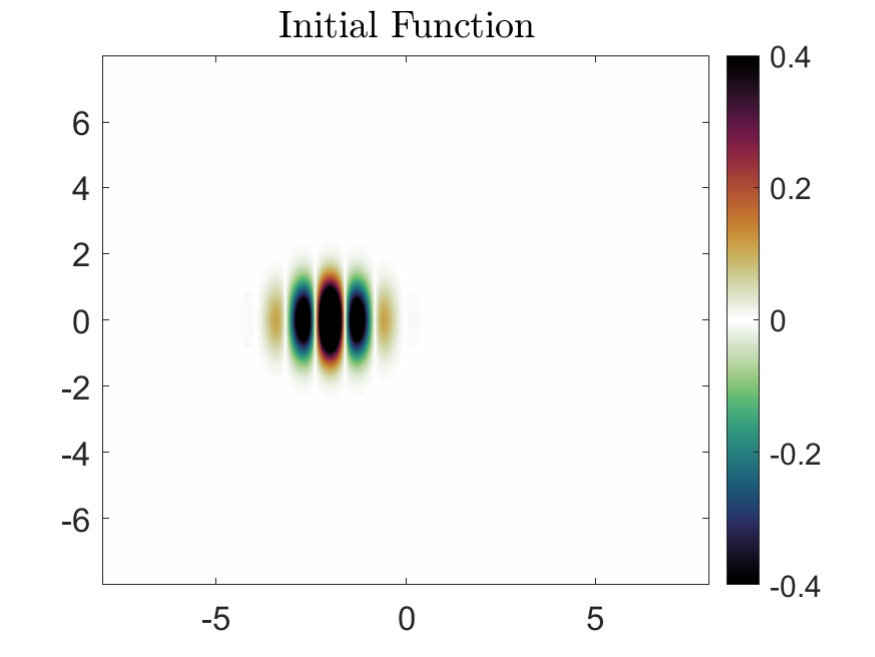}
\caption{Potential and Type I initial function  (quantum scattering example).}
\label{potential}
\end{figure}

\begin{figure}[htp!]
\subfigure[Errors of spatial discretizations]{\includegraphics[width=0.45\textwidth]{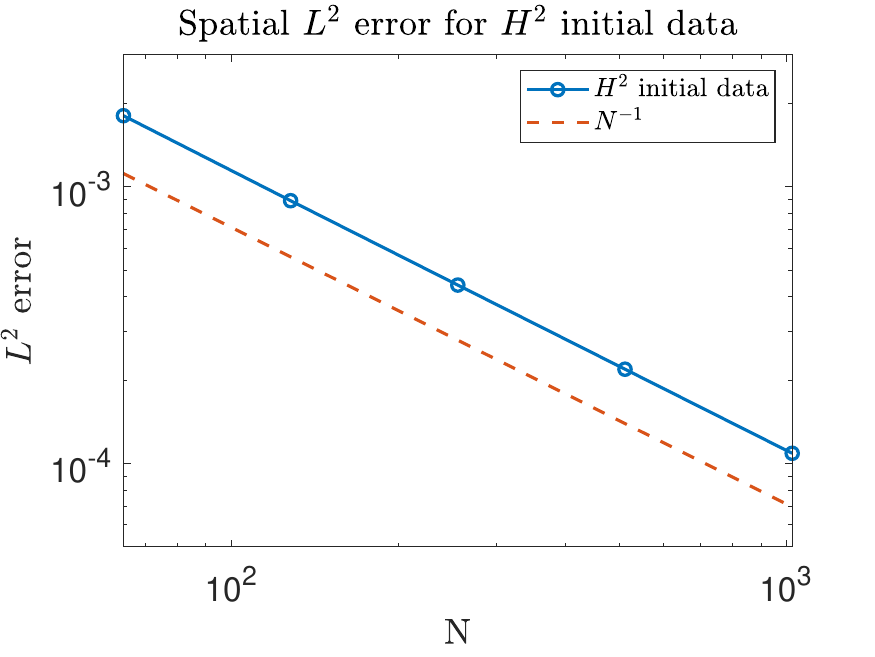}}
\subfigure[Errors of temporal discretizations]{\includegraphics[width=0.45\textwidth]{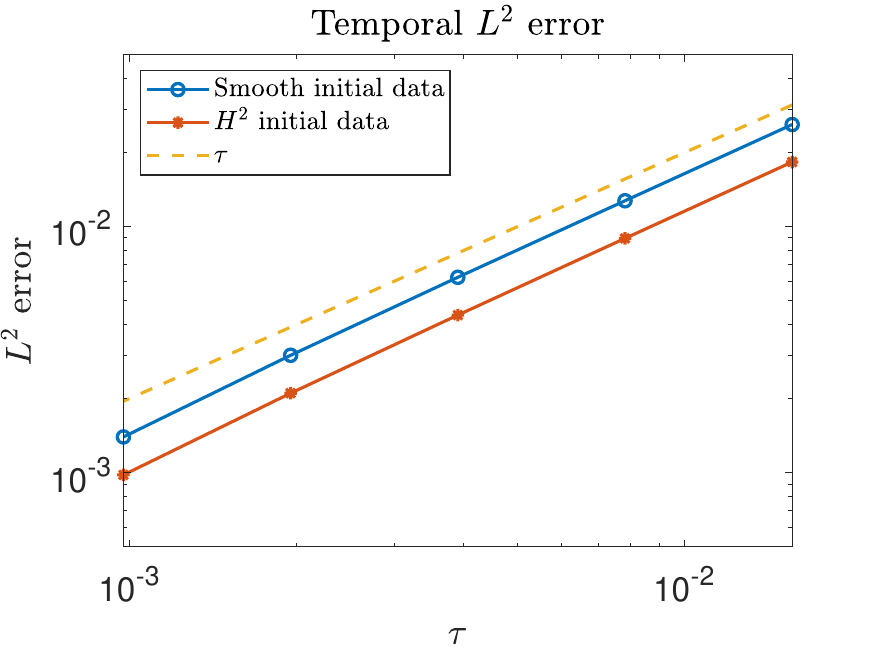}}
\caption{
Errors and convergence rates (quantum scattering example).
}
\label{Fig8}
\end{figure}

The convergence of numerical solutions is tested by comparing the numerical solutions with a reference solution at $T = 1/4$, computed by using $N = 2^{13}$ and $\tau = 2^{-13}$ for both types of initial functions. For the Type II $H^2$ initial function, the spatial discretization errors of the numerical solutions with $N = 2^j$ with $j = 6, \ldots, 10$ and a fixed $\tau = 2^{-13}$ are shown in Figure \ref{Fig8} (a), which demonstrates first-order convergence with respect to $N$. Moreover, the temporal discretization errors for both Type I and Type II initial functions are shown in Figure \ref{Fig8} (b), which demonstrates first-order convergence with respect to $\tau$. These numerical results are consistent with the theoretical results proved in Theorem~\ref{theorem-lsp}.

The effectiveness of domain extension in computation is tested for the Type I initial function and shown in Figure \ref{Fig9}, which shows that the solution obtained from a small initial domain $L = 10$, with domain extension performed once at $T = 0.5664$, closely matches the solution from a larger initial domain with $L=20$ from the beginning. Similar phenomena can also be observed in the case of the Type II initial function and omitted here. 

\begin{figure}[h!]
\includegraphics[width=0.45\textwidth]{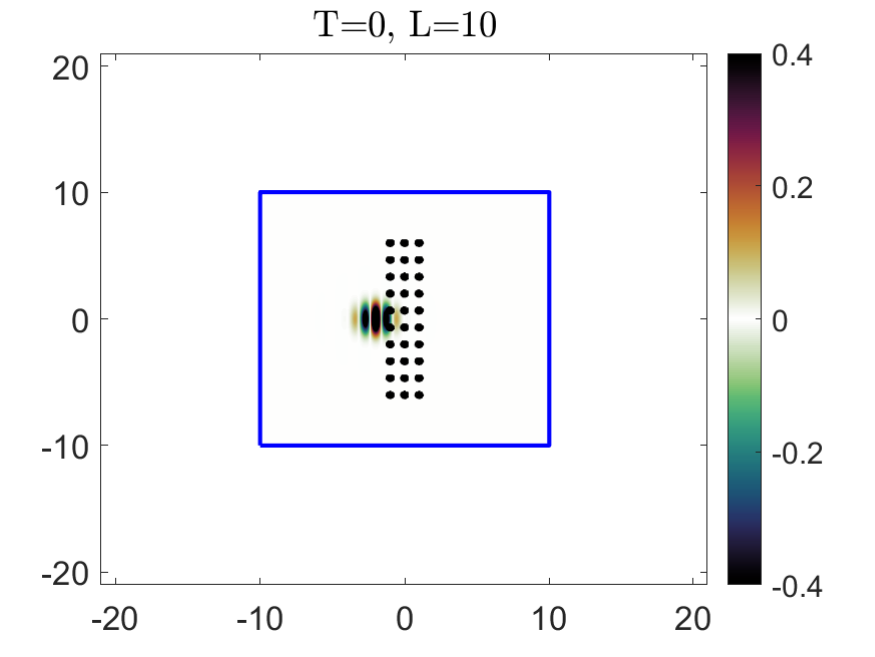}
\includegraphics[width=0.45\textwidth]{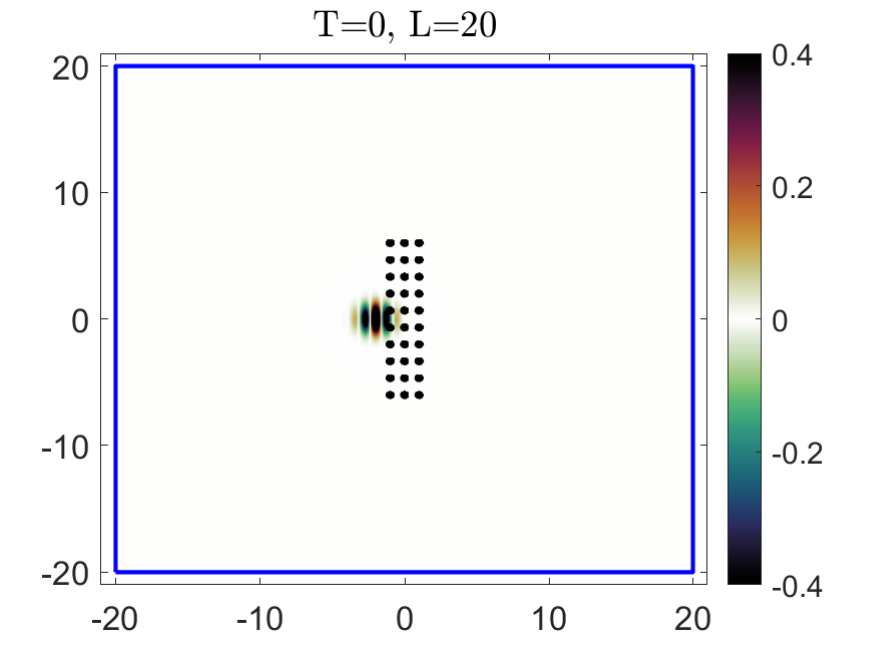}\\
\includegraphics[width=0.45\textwidth]{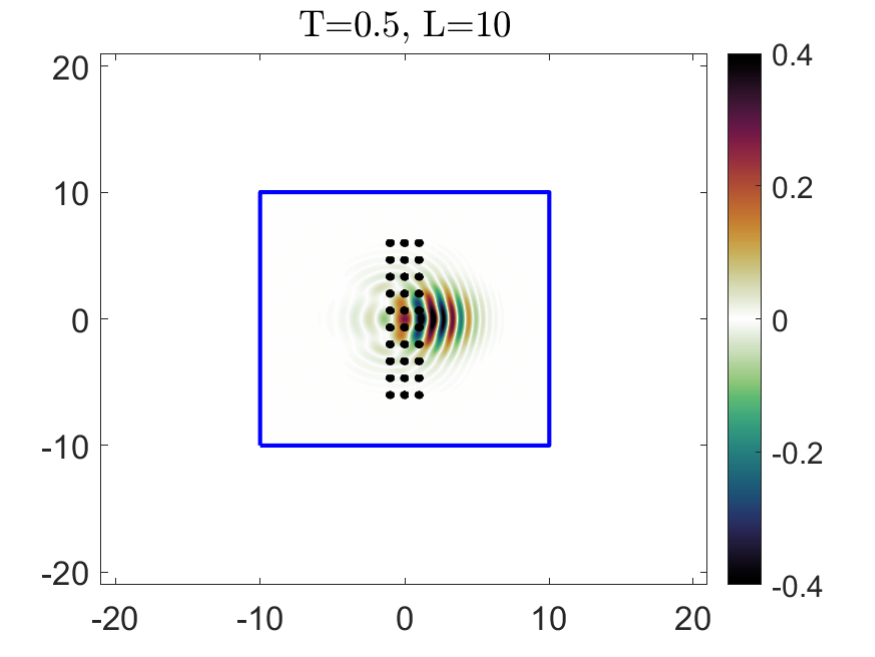}
\includegraphics[width=0.45\textwidth]{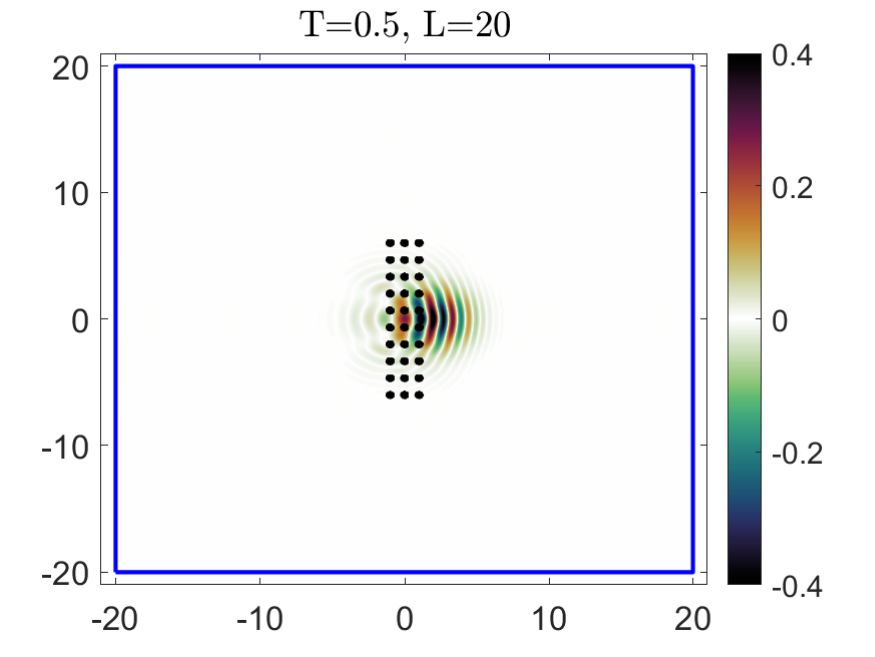}\\
\includegraphics[width=0.45\textwidth]{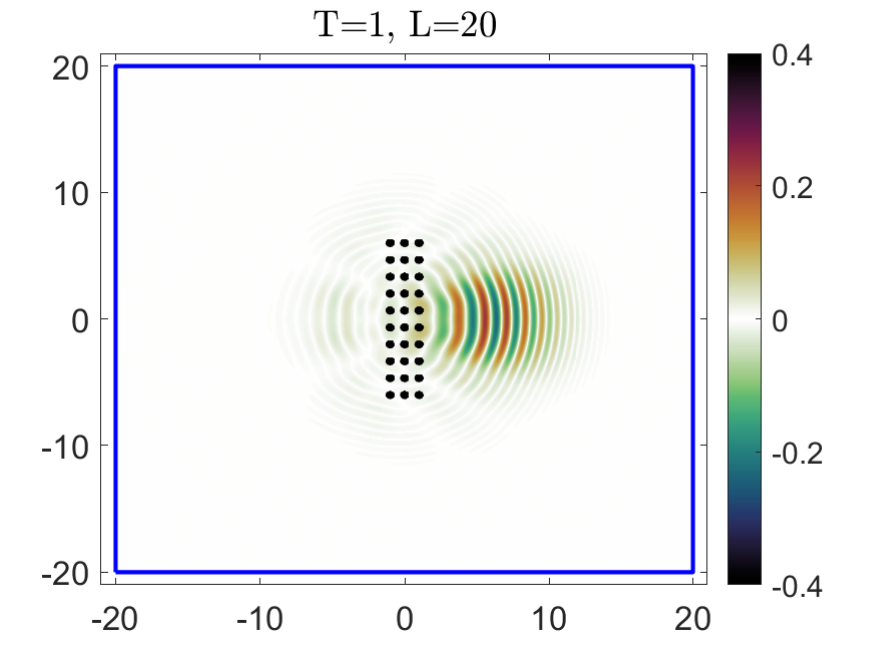}
\includegraphics[width=0.45\textwidth]{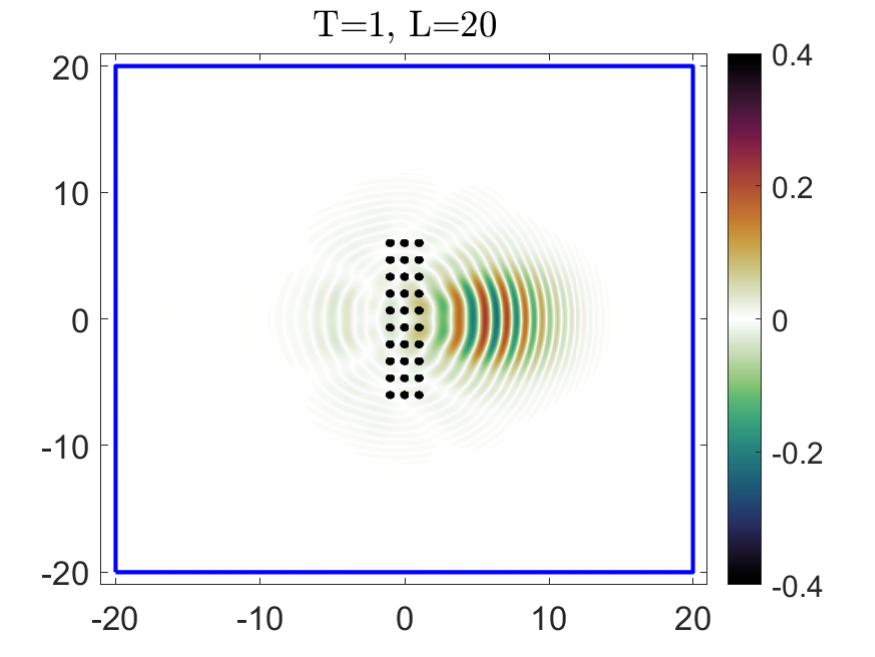}
\caption{
Effectiveness of domain extension (quantum scattering example).\newline
Figures on the left column: $L=10$ at $t=0$, and domain is extended at $T = 0.5664$.\newline
Figures on the right column: $L=20$ at $t=0$, without domain extension. 
}
%
\label{Fig9}
\end{figure}

\pagebreak
\section{Conclusion}\label{section:conclusion}
We have proposed a novel framework for solving the linear Schr\"odinger equation with an external potential in $\mathbb{R}^d$, by employing a smooth cut-off function to truncate the equation from the whole space to a scaled torus and solving the equation on the scaled torus by a first-order exponential integrator in time and Fourier spectral method in space. We have proved that the numerical solution on the scaled torus, with zero extension to $\R^d$, converges to the solution of the original problem in the whole space with first-order  convergence in time and $\gamma/2$-order convergence in space for initial data in $H^\gamma(\mathbb{R}^d) \cap L^2(\R^d;|x|^{2\gamma} dx)$ with $\gamma \geq 2$. 
In the case where $\gamma = 1$, we have proved that the numerical scheme has half-order convergence under an additional CFL condition $\tau=O(N^{-1})$. For practical computation, beyond the theoretical analysis in this paper, we have proposed an algorithm for dynamically adjusting the truncated domain, allowing for continued computation beyond the initial truncated domain. The framework developed in this paper is expected to serve as a fundamental approach to solving dispersive and wave equations in the whole space and analyzing the errors in approximating solutions in the whole space by numerical solutions on a scaled torus (with zero extension to the whole space).


\appendix
\renewcommand{\thesection}{Appendix}
\section{Proof of Lemma \ref{lem:lwp}} \label{appendix}
\renewcommand{\theequation}{A.\arabic{equation}}

For $u_0\in H^\gamma(\R^d)\cap L^2(\R^d;|x|^{2\gamma}dx)$, with $\gamma \in \Z^+$, it is well known that equation \eqref{model-lsp} has a unique solution $u\in C(\R; H^\gamma(\R^d)) $ satisfying 
$$
\|u\|_{L^\infty(0,T;H^\gamma(\R^d))}\le C_T \| u_0\|_{H^\gamma(\R^d)} 
$$ 
for any $T>0$. 
It remains to prove that $u\in C(\R; L^2(\R^d;|x|^{2 \gamma} d x))$. To this end, we adopt the idea in \cite{NP2009CPDE} for the nonlinear Schr\"odinger equation by considering the partial differential operator \begin{align}\label{equ:g} 
\Gamma_j  := (x_j + 2it\partial_{x_j})  
                          &= e^{it \Delta} x_j e^{-it \Delta} , \quad j=1,\cdots, d , 
\end{align}
where the last equality can be verified by applying the Fourier transform to $e^{it \Delta} x_j e^{-it \Delta}$. 
By using the multi-index notation $\Gamma^ \beta(t) := (x + 2it\nabla )^ \beta $ for $\beta\in (\Z_{\geq 0})^d , $ 
we can derive from \eqref{equ:g} that
\begin{align} \label{equ:com}
\Gamma^ \beta(t) \tstd{it} \cdot =\tstd{it}  (x^ \beta \ \cdot).
\end{align}

For any fix $\M>0$ and $T>0$, which will be chosen later, we denote by $E$ the set of functions $u$ such that $\|u\|_E \le {\M}$, with  
$$
\|u\|_E:= 
\sum_{ | \beta| \leq \gamma}  \|\Gamma^ \beta(t)  u\|_{L^\infty  L^2([0,T]\times \R^d)}
+\| u \|_{L^\infty  H^ \gamma([0,T]\times \R^d)} , 
$$
and let $ (E, \rho) $ be the complete metric space with distance $ \rho(u,v) = \|u-v\|_E$. We aim to prove that, by choosing sufficiently large $M$ and sufficiently small $T$, we can define a contraction map $\Phi:E\rightarrow E$ as follows: 
$$
\Phi (u) : =\tstd{it} u_0 -i\int_0^t \tstd{i(t-s)} (V(x)u(x,s))\,ds .
$$ 
This would imply the existence and uniqueness of solutions in $E$. 

We first prove that $\Phi(u)\in E$ for any $u\in E$. This can be proved by utilizing \eqref{equ:g} and \eqref{equ:com}, which imply that
 \begin{align*}
  \Gamma^ \beta(t) \Phi(u)& = \tstd{it}  (x^ \beta u_0) 
         - i\int_0^t \tstd{it} x^ \beta \tstd{-is} (V(x)u(x,s))\,ds\\
       &= \tstd{it}  (x^ \beta u_0) 
         - i\int_0^t \tstd{i(t-s)} \tstd{is} x^ \beta \tstd{-is} (V(x)u(x,s))\,ds\\
       &= \tstd{it}  (x^ \beta u_0) 
         - i\int_0^t \tstd{i(t-s)} \Gamma^ \beta(s)(V(x)u(x,s))\,ds.
      \end{align*}
Considering the smoothness and compact support of potential $V(x)$, along with the fact that $u\in E$, we derive that
     \begin{align}\label{est:non}
    \| \Phi(u) \|_E
        & \le  \sum_{ |\beta|\le \gamma} \|x^ \beta u_0\|_{ L^2 (\R^d)} + \|u_0\|_{H^ \gamma(\R^d)}
          +T  \|Vu\|_E\notag \\
         & \le  \sum_{ |\beta|\le \gamma} \|x^ \beta u_0\|_{ L^2( \R^d)} + \|u_0\|_{H^ \gamma(\R^d)}
       +CT \|u\|_E.
     \end{align}
By choosing $\M=2 \sum_{ |\beta|\le \gamma}\|x^ \beta u_0\|_{L^2(\mathbb{R}^d)} +2 \|u_0\|_{H^ \gamma(\R^d)}$ and sufficiently small $T$ such that $CT<1/2$, we have  
     \begin{align*}
          \| \Phi(u)\|_{E} 
            \le (1/2+ CT) \M \le \M.
     \end{align*}
This implies that $\Phi(u)\in E$. Next, we consider $u,v\in E$ and 
     \begin{align*}
     \Phi(u)-\Phi(v) =-i\int_0^t \tstd{i(t-s)} (V(x)(u(x,s)-v(x,s)))\,ds.
     \end{align*}       
By applying the same argument as in \eqref{est:non} to the identity above, we obtain
     \begin{align*}
      \rho(\Phi(u),\Phi(v)) \le 
      T \|V(u-v)\|_E
      \le CT \rho(u,v).
     \end{align*} 
Therefore, $\Phi:E\rightarrow E$ is a contraction if $T$ is sufficiently small. 

This proves the local well-posedness of equation \eqref{model-lsp} in $E$. Since $ T $ is independent of $u $, we can repeatedly apply this result on intervals $[T, 2T]$, $[2T, 3T]$, ..., to prove that $u\in C(\R; L^2(\R^d;|x|^{2 \gamma} d x))$. This completes the proof of Lemma \ref{lem:lwp}.
      
\hfill\endproof

\section*{Acknowledgments}
This work was supported in part by the Research Grants Council of the Hong Kong Special Administrative Region, China (Project No. PolyU/RFS2324-5S03, PolyU/GRF15306123) and an internal grant of Hong Kong Polytechnic University (Project ID: P0045404). 
\bigskip

\bibliographystyle{abbrv}
\bibliography{_LSUnboundedDomain}

\begin{thebibliography}{10}

\bibitem{BIK+2014FCM}
P.~Bader, A.~Iserles, K.~Kropielnicka, and P.~Singh.
\newblock Effective approximation for the semiclassical {{Schr\"odinger}}
  equation.
\newblock {\em Found Comput Math}, 14(4):689--720, 2014.

\bibitem{BS2005SJSC}
W.~Bao and J.~Shen.
\newblock A fourth-order time-splitting {{Laguerre--Hermite}} pseudospectral
  method for {{Bose--Einstein}} condensates.
\newblock {\em SIAM J. Sci. Comput.}, 26(6):2010--2028, 2005.

\bibitem{Boyd1987JCP}
J.~P. Boyd.
\newblock Spectral methods using rational basis functions on an infinite
  interval.
\newblock {\em J. Comput. Phys.}, 69(1):112--142, 1987.

\bibitem{CLL2023IJNA}
J.~Cao, B.~Li, and Y.~Lin.
\newblock A new second-order low-regularity integrator for the cubic nonlinear
  {{Schr\"odinger}} equation.
\newblock {\em IMA J. Numer. Anal.}, page drad017, 2023.

\bibitem{Christov1982SJAM}
C.~I. Christov.
\newblock A complete orthonormal system of functions in {$L^2(-\infty,
  \infty)$} space.
\newblock {\em SIAM J. Appl. Math.}, 42(6):1337--1344, 1982.

\bibitem{DITRN}
H.~Dietert and A.~Iserles.
\newblock Fast expansion on the real line, Technical Report NA02, DAMTP,
  University of Cambridge, 2017.

\bibitem{Faou2012}
E.~Faou.
\newblock {\em Geometric {{Numerical Integration}} and {{Schr\"odinger
  Equations}}}.
\newblock Zurich {{Lectures}} in {{Advanced Mathematics}}. European
  Mathematical Society, 2012.

\bibitem{grafakos_kato-ponce_2014}
L.~Grafakos and S.~Oh.
\newblock The {{Kato-Ponce Inequality}}.
\newblock {\em Commun. Partial Differ. Equ.}, 39(6):1128--1157, 2014.

\bibitem{Guo1998}
B.~Guo.
\newblock {\em Spectral {{Methods}} and {{Their Applications}}}.
\newblock World Scientific, 1998.

\bibitem{HO2010AN}
M.~Hochbruck and A.~Ostermann.
\newblock Exponential integrators.
\newblock {\em Acta Numer.}, 19:209--286, 2010.

\bibitem{IKS+2022}
A.~Iserles, K.~Kropielnicka, K.~Schratz, and M.~Webb.
\newblock Solving the linear semiclassical {{Schr\"odinger}} equation on the
  real line, 2022, http://arxiv.org/abs/2102.00413.

\bibitem{IKS2018}
A.~Iserles, K.~Kropielnicka, and P.~Singh.
\newblock Compact schemes for laser--matter interaction in {{Schr\"odinger}}
  equation based on effective splittings of {{Magnus}} expansion.
\newblock {\em Comput. Phys. Commun.}, 234:195--201, 2019.

\bibitem{ILW2022}
A.~Iserles, K.~Luong, and M.~Webb.
\newblock Approximation of {{Wave Packets}} on the {{Real Line}}.
\newblock {\em Constr. Approx.}, 58(1):199--250, 2023.

\bibitem{IW2019FCM}
A.~Iserles and M.~Webb.
\newblock Orthogonal systems with a skew-symmetric differentiation matrix.
\newblock {\em Found. Comput. Math.}, 19(6):1191--1221, 2019.

\bibitem{IW2020JFAA}
A.~Iserles and M.~Webb.
\newblock A family of orthogonal rational functions and other orthogonal
  systems with a skew-hermitian differentiation matrix.
\newblock {\em J. Fourier Anal. Appl.}, 26(1):19, 2020.

\bibitem{JL2000BNM}
T.~Jahnke and C.~Lubich.
\newblock Error bounds for exponential operator splittings.
\newblock {\em BIT}, 40(4):735--744, 2000.

\bibitem{kato_commutator_1988}
T.~Kato and G.~Ponce.
\newblock Commutator estimates and the {{Euler}} and {{Navier-Stokes}}
  equations.
\newblock {\em Commun. Pure Appl. Math.}, 41(7):891--907, 1988.

\bibitem{LW2021NM}
B.~Li and Y.~Wu.
\newblock A fully discrete low-regularity integrator for the {{1D}} periodic
  cubic nonlinear {{Schr\"odinger}} equation.
\newblock {\em Numer. Math.}, 149(1):151--183, 2021.

\bibitem{Lubich2008MC}
C.~Lubich.
\newblock On splitting methods for {{Schr\"odinger-Poisson}} and cubic
  nonlinear {{Schr\"odinger}} equations.
\newblock {\em Math. Comput.}, 77(264):2141--2153, 2008.

\bibitem{MS1997PRA}
M.~A.~M. Marte and S.~Stenholm.
\newblock Paraxial light and atom optics: {{The}} optical {{Schr\"odinger}}
  equation and beyond.
\newblock {\em Phys. Rev. A}, 56(4):2940--2953, 1997.

\bibitem{MWZ2024}
N.~J. Mauser, Y.~Wu, and X.~Zhao.
\newblock The cubic nonlinear {{Schr\"odinger}} equation with rough potential,
  2024, http://arxiv.org/abs/2403.16772.

\bibitem{NP2009CPDE}
J.~Nahas and G.~Ponce.
\newblock On the persistent properties of solutions to semi-linear
  {{Schr\"odinger}} equation.
\newblock {\em Commun. Partial Differ. Equ.}, 34(10):1208--1227, 2009.

\bibitem{ORS2021FCM}
A.~Ostermann, F.~Rousset, and K.~Schratz.
\newblock Error estimates of a fourier integrator for the cubic
  {{Schr\"odinger}} equation at low regularity.
\newblock {\em Found Comput Math}, 21(3):725--765, 2021.

\bibitem{ORS2022JEMS}
A.~Ostermann, F.~Rousset, and K.~Schratz.
\newblock Fourier integrator for periodic {{NLS}}: low regularity estimates via
  discrete {{Bourgain}} spaces.
\newblock {\em J. Eur. Math. Soc.}, 2022.

\bibitem{OS2018FCM}
A.~Ostermann and K.~Schratz.
\newblock Low regularity exponential-type integrators for semilinear
  {{Schr\"odinger}} equations.
\newblock {\em Found. Comput. Math.}, 18(3):731--755, 2018.

\bibitem{SD1995CMA}
F.~Schmidt and P.~Deuflhard.
\newblock Discrete transparent boundary conditions for the numerical solution
  of {{Fresnel}}'s equation.
\newblock {\em Comput. Math. Appl.}, 29(9):53--76, 1995.

\bibitem{Shankar1994}
R.~Shankar.
\newblock {\em Principles of {{Quantum Mechanics}}}.
\newblock Springer US, 1994.

\bibitem{Tao2006}
T.~Tao.
\newblock {\em Nonlinear {{Dispersive Equations}}: {{Local}} and {{Global
  Analysis}}}.
\newblock American Mathematical Society, 2006.

\bibitem{Weideman1994SJNA}
J.~A.~C. Weideman.
\newblock Computation of the complex error function.
\newblock {\em SIAM J. Numer. Anal.}, 31(5):1497--1518, 1994.

\bibitem{Weideman2006JPMG}
J.~A.~C. Weideman.
\newblock Spectral differentiation matrices for the numerical solution of
  {{Schr\"odinger}}'s equation.
\newblock {\em J. Phys. Math. Gen.}, 39(32):10229--10237, 2006.

\bibitem{WY2021MC}
Y.~Wu and F.~Yao.
\newblock A first-order {{Fourier}} integrator for the nonlinear
  {{Schr\"odinger}} equation on {$\mathbb{T}$} without loss of regularity.
\newblock {\em Math. Comput.}, 91:1213--1235, 2021.

\end{thebibliography}

\end{document}